\newtheorem*{rep@theorem}{\rep@title}
\newcommand{\newreptheorem}[2]{%
\newenvironment{rep#1}[1]{%
\def\rep@title{#2 \ref{##1}}%
\begin{rep@theorem}}%
{\end{rep@theorem}}}
\newtheorem{theorem}{Theorem}
\newtheorem{definition}{Definition}
\newtheorem{example}{Example}
\newtheorem{lemma}{Lemma}
\newtheorem{remark}{Remark}
\newtheorem{prop}{Proposition}
\newcommand{\be}{\begin{enumerate}}
\newcommand{\ee}{\end{enumerate}}
\newcommand{\beq}{\begin{equation}}
\newcommand{\eeq}{\end{equation}}
\newcommand{\gst}{\, | \,} % "group such that", vertical bar with small space, for group presentations
\newcommand{\bel}[1]{\begin{equation}\label{#1}}
\newcommand{\eee}{\end{equation}}
\newcommand{\LBA}{\left\{\begin{array}}
\newcommand{\EAR}{\end{array}\right.}
\newenvironment{romanenumerate}
{\begin{enumerate}

}
{

\end{enumerate}
} % enumerate list with roman labels
\newcommand{\comment}[1]{} % for multi-line comments
\begin{document}
\title{Effective construction of covers of canonical Hom-diagrams  \\for equations over torsion-free hyperbolic groups}
\author{Olga Kharlampovich, Alexei Myasnikov, Alexander Taam}

\maketitle
\begin{abstract}
We show that, given a finitely generated group $G$ as the coordinate group of a finite system of equations over a torsion-free hyperbolic group $\Gamma$, there is an algorithm which constructs a cover of a canonical solution diagram. The diagram encodes all homomorphisms from $G$ to $\Gamma$ as compositions of factorizations through $\Gamma$-NTQ groups and canonical automorphisms  of the corresponding NTQ-subgroups.  We also give another characterization of  $\Gamma$-limit groups as iterated generalized doubles over $\Gamma$.
\end{abstract}

\section{Introduction}
Given a group $G$, another group $L$ is said to be \emph{fully residually $G$} (or \emph{discriminated by $G$}) if, given any finite subset $A\subset L$, $1\notin A$, there exists a homomorphism $\phi:L\rightarrow G$, such that $\phi(a)\neq 1$ for all $a\in A$. The study of groups discriminated by various classes of groups has been ongoing since the early twentieth century, with increased activity often occurring as other equivalent characterizations of such groups, and applications in different fields, are found. For example, the development of algebraic geometry over groups allowed finitely generated groups discriminated by a free group $F$ to be understood as coordinate groups of irreducible systems of equations over $F$. This helped enable solutions to Tarski's problems on the elementary theory of free groups, given independently by Kharlampovich-Myasnikov and Sela (\cite{elemFree} and \cite{selaTarski}).

Throughout, we let $\Gamma$ be an arbitrary fixed torsion-free hyperbolic group. While some of the theory of fully residually free groups can be generalized easily to groups discriminated by $\Gamma$, in general groups discriminated by $\Gamma$ are more difficult to work with for several reasons. Unlike finitely generated fully residually free groups, finitely generated fully residually $\Gamma$ groups may be not finitely presented and can contain finitely generated subgroups with no finite presentation, so algorithmic results are often  more difficult to obtain.

The following  characterizations of $\Gamma$-limit groups are well known. Let $L$ be a finitely generated group, then the following are equivalent.
\begin{enumerate}
\item $L$ is a $\Gamma$-limit group (in the space of marked groups, see \cite{Gri84} for definition)
\item $L$ is fully residually $\Gamma$
\item $L$ embeds in an ultrapower of $\Gamma$
\item $Th_{\forall}(\Gamma)\subseteq Th_{\forall}(L)$ ($Th_{\forall}$ is universal first order theory)
\item $L$ is the coordinate group $\Gamma _{R(S)}$ (see Section \ref{section:alg-geo}) of an irreducible (in the Zariski topology) algebraic set over $\Gamma$ defined by a system $S$ of coefficient-free equations (call such a system an \emph{irreducible system})
\end{enumerate}

There are analogous equivalent characterizations for the case with coefficients (see \cite{KMlyndon} Theorem B). 

Given a group $G$, a free rank one extension of centralizer over $G$ is a group $E=\langle G,t|[C_G(U),t]=1\rangle$, where $U\subseteq G$ is finite. Note that $E\cong G*_{C_G(U)}\cong G*_{C_G(U)}(C_G(U)\times \mathbb{Z})$. A free rank $n$ extension of centralizer is defined similarly as $G*_{C_G(U)}(C_G(U)\times \mathbb{Z}^n)$. A group is called an \emph{iterated extension of centralizer over $G$} if it may be obtained from $G$ by a finite sequence of extension of centralizers (each centralizer in the previous group).

\begin{prop}\label{prop:iter-ext}(Theorem E in \cite{KMlyndon})
A group $L$ is a $\Gamma$-limit group if and only if it embeds into an iterated extension of centralizers over $\Gamma$.
\end{prop}

In fact, the embeddings into iterated extensions of centralizers are obtained via embeddings into $\Gamma$-NTQ groups, which are coordinate groups of nicely structured systems of equations (see Section \ref{sec:NTQ}). Some relevant details of such embeddings are described in Section \ref{section:homoms}, along with other related constructions.

Our first main theorem shows that, for a fixed torsion-free hyperbolic group $\Gamma$, the class of iterated generalized doubles over $\Gamma$ (terminology introduced by Champetier and Guirardel in \cite{markgrps} for free groups) is also equivalent to the class of $\Gamma$-limit groups.

\begin{definition}A group $G$ is a \emph{generalized double} over a $\Gamma$-limit group $L$ if it splits as $G=A*_CB$, or $G=A*_C$, where $A$ and $B$ are finitely generated such that:
\begin{enumerate}
\item $C$ is a non-trivial abelian group whose images are maximal abelian subgroups in the vertex groups.
\item there is an epimorphism $\phi:G\rightarrow L$ which is injective on each vertex group
\end{enumerate}
\end{definition}
Note that each vertex group is discriminated by $\Gamma$.
\begin{definition}
A group is an \emph{iterated generalized double over $\Gamma$} if it belongs to the smallest class of groups $\mathcal{IGD}-\Gamma$ that contains  groups isomorphic to subgroups of $\Gamma$, and is stable under free products and the construction of generalized doubles over groups in $\mathcal{IGD}-\Gamma$.
\end{definition}
Champetier and Guirardel showed in \cite{markgrps} that $\mathcal{IGD}-F$ is the class of $F$-limit groups. Our first main theorem shows that the same holds for $\Gamma$-limit groups.

\begin{theorem}\label{Gamma-IGD}
A group  is a $\Gamma$-limit group if and only if it is an iterated generalized double over $\Gamma$ 
\end{theorem}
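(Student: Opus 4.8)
The plan is to prove the two directions separately, exploiting the characterization of $\Gamma$-limit groups via the JSJ/abelian decomposition theory that is available for groups discriminated by a torsion-free hyperbolic group.

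\textbf{The direction $(\Leftarrow)$.} I would show that every group in $\mathcal{IGD}\text{-}\Gamma$ is a $\Gamma$-limit group by structural induction on the construction. The base case $\Gamma$ itself is immediate, and closure of the class of $\Gamma$-limit groups under free products is standard (a free product of groups discriminated by $\Gamma$ is discriminated by $\Gamma$, using that $\Gamma$ contains a free product of copies of itself, or directly via the combinatorics of normal forms). The substantive step is: if $G=A*_CB$ or $G=A*_C$ is a generalized double over a $\Gamma$-limit group $L$, with $\phi:G\to L$ injective on vertex groups and $C$ maximal abelian in the vertices, then $G$ is discriminated by $\Gamma$. Here I would use the standard "big powers"/separation argument: compose $\phi$ with a discriminating family $L\to\Gamma$, and then pre-compose with automorphisms of $G$ given by Dehn twists along the edge group $C$ (powers of the generator of $C$), using that $C$ is malnormal-like (maximal abelian) in each vertex group to guarantee that for any finite set of nontrivial elements, a sufficiently high Dehn twist followed by a generic map $L\to\Gamma$ keeps them nontrivial. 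This is the torsion-free-hyperbolic analogue of the free-group argument in \cite{markgrps}, and the hyperbolicity of $\Gamma$ (hence of the relevant vertex groups' images) supplies the needed cancellation estimates.

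\textbf{The direction $(\Rightarrow)$.} Here I would argue by induction on a suitable complexity of the $\Gamma$-limit group $G$ (e.g. the complexity of its abelian JSJ decomposition, or the rank). If $G$ is freely decomposable, write $G=G_1*\cdots*G_k$ (possibly with a free factor), recurse on the freely indecomposable factors, and conclude by closure under free products. If $G$ is freely indecomposable, invoke the abelian JSJ decomposition of the $\Gamma$-limit group $G$: it acts on a tree with abelian edge groups, and using that $G$ is discriminated by $\Gamma$ one shows the edge groups can be taken maximal abelian in the adjacent vertex groups (sharpening the decomposition if necessary). Then one produces a proper quotient $L$ of $G$ which is again a $\Gamma$-limit group and is injective on each vertex group of this decomposition — this is exactly the "shortening"/Makanin–Razborov type step: a sequence of discriminating maps $G\to\Gamma$, after precomposing with canonical automorphisms (Dehn twists and vertex-group automorphisms) and passing to a limit, factors through a proper quotient $L$ that is still a $\Gamma$-limit group, and the limit map is injective on vertex groups because the vertex groups are "rigid" (one-ended, with no further abelian splitting) or carry their own already-understood structure. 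This exhibits $G$ as a generalized double over $L$; since $L$ has strictly smaller complexity, by induction $L\in\mathcal{IGD}\text{-}\Gamma$, hence so is $G$.

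\textbf{Main obstacle.} The hard part is the $(\Rightarrow)$ direction, specifically producing the proper quotient $L$ together with the injectivity of the quotient map on the vertex groups of the abelian decomposition. This requires the shortening argument over $\Gamma$: one needs an analogue of Rips–Sela shortening for actions of $\Gamma$-limit groups on $\R$-trees, which in turn relies on the structure theory of stable actions and on $\Gamma$ being equationally Noetherian (so that Makanin–Razborov diagrams exist and limits of $\Gamma$-points behave well). Controlling the edge groups so that they are genuinely maximal abelian in the vertex groups — rather than merely abelian — is the delicate bookkeeping point, and is where the torsion-freeness and hyperbolicity of $\Gamma$ are used most essentially (maximal abelian subgroups of $\Gamma$, and of $\Gamma$-limit groups, are malnormal, which is what makes the Dehn-twist discrimination in the $(\Leftarrow)$ direction run and what makes the vertex groups of the JSJ rigid enough in the $(\Rightarrow)$ direction).
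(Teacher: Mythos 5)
Your proposal is mathematically reasonable, but it is a genuinely different route from the one the paper takes, and the paper's route is deliberately lighter on machinery. For the direction $(\Leftarrow)$ you propose a Champetier--Guirardel-style big-powers argument: compose a discriminating family $L\to\Gamma$ with high Dehn twists along $C$, using malnormality of maximal abelian subgroups to control cancellation. The paper avoids all of this: given $\phi:G\to L$ injective on the vertex groups, it forms the free rank-one extension of centralizer $\hat L=\langle L,t\mid [\hat C,t]=1\rangle$ where $\hat C$ is the maximal abelian subgroup of $L$ containing $C^\phi$, verifies by normal forms that $G\cong A^\phi *_{C^\phi}(B^\phi)^t\le\hat L$ (resp.\ $A^\phi*_{C^\phi}$ in the HNN case), and invokes Theorem~\ref{th4}(ii) together with the fact that subgroups of $\Gamma$-limit groups are $\Gamma$-limit groups. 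No big-powers estimates are needed, and the hyperbolicity of $\Gamma$ is used only through the embedding theorem, not through any geometric cancellation argument.

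For the direction $(\Rightarrow)$ you propose a shortening-quotient induction on the complexity of the abelian JSJ, producing a proper $\Gamma$-limit quotient injective on vertex groups. The paper instead takes the embedding of $G$ into an iterated extension of centralizers $E$ over $\Gamma$ (again Theorem~\ref{th4}(ii)) and runs Bass--Serre theory: writing $E=L*_C$ as an HNN extension over one centralizer, $G$ acts on the Bass--Serre tree, and the induced graph-of-groups decomposition of $G$ has edge groups $G\cap C^g$ that remain maximal abelian in the vertex groups $G\cap L^g$ (because conjugates of centralizers are centralizers, and intersecting both a maximal abelian subgroup and its ambient group with $G$ preserves maximality); the retraction $E\to L$ killing $t$ is injective on each vertex group, and one inducts on the number of centralizer extensions. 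This replaces the analytic content of the shortening argument over $\Gamma$ (stable actions on $\mathbb{R}$-trees, Rips theory) with a purely combinatorial tree argument, which is exactly why the authors present this theorem first and describe its proof as ``more elementary.'' Your route is viable but would in effect re-derive a good portion of what Theorem~\ref{th4} already packages; if you do go that way, the two delicate points you flag (promoting edge groups to genuinely maximal abelian, and the base case where the shortening quotient lands inside $\Gamma$) are real and would need to be handled explicitly, whereas the paper's tree argument disposes of the first one in a single step via the centralizer-intersection observation.
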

Theorem \ref{Gamma-IGD} is proved in Section \ref{sec:IGD}. The proof uses Bass-Serre theory and Proposition \ref{prop:iter-ext}.

Recall that the Grushko decomposition of a finitely generated group $G$ is the free product decomposition $G=F_r*A_1*\cdots A_k$, where $F_r$ is a free group of finite rank, and each $A_i$ is non-trivial, freely indecomposable, and not infinite cyclic (see \cite{Gr40}). This decomposition is unique up to permutation of the conjugacy classes of the $A_i$ in $G$.  
 
Our second main result improves on the results of \cite{embedExt} which is necessary for the proof of the decidability of the $\forall\exists$-theory and elementary theory of a torsion-free hyperbolic group in the subsequent paper \cite{elemHyp}. Given a finite system of equations over $\Gamma$, a canonical Hom-diagram  (a.k.a. Makanin-Razborov diagram)  encodes all homomorphisms from the coordinate group $\Gamma_{R(S)}$ to $\Gamma$ as compositions of factorizations through $\Gamma$-limit quotients, and  canonical automorphisms of those quotients which correspond to JSJ decompositions of freely indecomposable factors (of those quotients).  Furthermore certain  $\Gamma$-NTQ groups (defined in Section \ref{sec:NTQ}) and a canonical completed Hom-diagram  are associated to such a diagram.  We algorithmically construct corrective extensions of these  $\Gamma$-NTQ groups (defined in Section \ref{subsection:IFT}) that form a cover of some  canonical completed Hom-diagram for $\Gamma_{R(S)}$, which is used in the subsequent paper \cite{elemHyp}. In (\cite{embedExt}, lemma 3.15) a similar diagram was algorithmically  constructed  but it was not canonical in a sense that automorphisms did not correspond to JSJ decompositions of $\Gamma$-limit quotients, they corresponded to some abelian splittings of these quotients.  We show that by eliminating some branches of this tree it becomes a cover of a canonical completed Hom-diagram.

\begin{theorem}  \label{thm:can-construct}Let $S(Z, A)=1$ be a finite system of equations over $\Gamma$. There is an algorithm to construct  a  cover (Definition \ref{cover}) of a   canonical completed Hom-diagram   for  the coordinate group $\Gamma _{R(S)}$.  \end{theorem}

Theorem \ref{thm:can-construct} is  proved in Section \ref{section:can-construct} using   the so-called Implicit Function Theorem (Proposition \ref{IFT}).

\section{Preliminaries}\label{section:prelims}
\subsection{Algebraic geometry over groups}\label{section:alg-geo}
We start with some basic theory of algebraic geometry over groups. For more background, see \cite{BMR99}. Let $G$ be a group generated by a finite set $A$ and $F(X)$ the free group on $X=\{x_1,\ldots,x_n\}$. For $S\subset G[X]=G*F(X)$, the expression $S(X,A)=1$ is called a \emph{system of equations} over $G$, and a \emph{solution} of $S(X,A)=1$ in $G$, is a $G$-homomorphism $\phi:G[X]\rightarrow G$ such that $\phi(S)=1$ (a $G$-homomorphism is determined by $Z=\phi(X)\in G^n$; the notation $S(Z,A)=1$ means that $Z$ corresponds to a solution of $S$). 

Denote the set of all solutions of $S(X,A)=1$ in $G$ by $V_G(S)$, the \emph{algebraic set defined by $S$}. Define the Zariski topology on $G^n$ by taking algebraic sets as a pre-basis of closed sets. Note that the algebraic set $V_G(S)$ uniquely corresponds to the normal subgroup $R(S)=\{T(X,A)\in G[X]|\forall Z\in G^n(S(Z,A)=1\rightarrow T(Z,A)=1)\}$ in $G[X]$, called the \emph{radical} of $S$. Call $G_{R(S)}=G[X]/R(S)$ the \emph{coordinate group} of $S$. Every solution of $S(X,A)=1$ in $G$ can be described as a $G$-homomorphism $G_{R(S)}\rightarrow G$.
\subsubsection{Quadratic equations}
A system of equations $S(X,A)=1$ is said to be (strictly) \emph{quadratic} if each $x_i\in X$ that appears in $S$, appears at most (exactly) twice, where $x_i^{-1}$ also counts as an appearance. There are four standard quadratic equations:
\begin{equation}\label{closedOrientable}\prod_{i=1}^n[x_i,y_i]=1; n\geq 1\end{equation}
\begin{equation}\label{boundOrientable}\prod_{i=1}^n[x_i,y_i]\prod_{i=1}^mc_i^{z_i}d=1; n,m\geq 0,n+m\geq 1\end{equation}
\begin{equation}\label{closedNonOrientable}\prod_{i=1}^nx_i^2=1; n\geq 1\end{equation}
\begin{equation}\label{boundNonOrientable}\prod_{i=1}^nx_i^2\prod_{i=1}^mc_i^{z_i}d=1; n,m\geq 0,n+m\geq 1\end{equation}
where $c_i$ and $d$ are non-trivial elements of $G$. Note that for any strictly quadratic word $S\in G[X]$, there is a $G$-automorphism of $G[X]$ that takes $S$ to a standard quadratic word. Also, there is a surface with boundary associated to each standard quadratic equation, specifically an orientable surface of genus $n$ with zero punctures for (\ref{closedOrientable}), an orientable surface of genus $n$ with $m+1$ punctures for (\ref{boundOrientable}), a non-orientable surface of genus $n$ and zero punctures for (\ref{closedNonOrientable}), and a non-orientable surface of genus $n$ with $m+1$ punctures for (\ref{boundNonOrientable}). For a standard quadratic equation $S$, let $\chi(S)$ be the Euler characteristic of the associated surface.

\subsubsection{$G$-NTQ groups}\label{sec:NTQ}
General systems of equations can exhibit some properties similar to those of quadratic systems. This can be seen when systems are in a particular \emph{quasi-quadratic} form.

\begin{definition}\label{NTQ}
A system of equations $S(X,A)=1$ over a non-abelian group $G$ generated by $A$, is called \emph{triangular quasi-quadratic over $G$} or $G$-TQ, if it can be partitioned into subsystems:
$$S_1(X_1,C_1)=1$$
$$S_2(X_2,C_2)=1$$
$$\vdots$$
$$S_n(X_n,C_n)=1$$
where:
\begin{enumerate}[(i)]
\item $\{X_1,\ldots,X_n\}$ is a partition of $X$;
\item $G_i=G[X_i,\ldots,X_n, A]/R_G(S_i,\ldots,S_n)$ for $1\leq i \leq n$, $G_i$ is the group corresponding to level $i$;
\item $G_{n+1}=G\ast F$ (where $F$ is a free group of finite rank) or a subgroup of this group that is a free product of $F_1$, $G$ and  conjugates of $G$ by  generators of $F_2$, where $F=F_1\ast F_2$;
\item $C_i=X_{i+1}\cup\ldots\cup X_n\cup A\subset G_{i+1}$ for $1\leq i \leq n-1$ and $C_n=A$.
\end{enumerate}

Furthermore, for each $i$ the subsystems $S_i$ must have one of the following forms:
\begin{enumerate}[(I)]
\item $S_i$ is quadratic in $X_i$
\item $S_i=\{[x,y]=1,[x,u]=1|x,y\in X_i, u\in U\}$ where $U$ is a maximal abelian subgroup of $F(X_{i+1},\ldots,X_n,A)$
\item $S_i=\{[x,y]=1|x,y\in X_i\}$
\item $S_i$ is empty
\end{enumerate}
The number $n$ is called the \emph{depth} of the system.

Very often when $C_{i}=C_{i+1}=\ldots =C_{i+j}$ we will join several equations $S_i(X_i,C_i)=1,\ldots S_{i+j}(X_{i+j}, C_i)=1$ into one system $T(X_i,\ldots, X_j,C_i)=1$.
Then the same system $S(X,A)=1$  will be represented as the union
$$T_1(\bar X_1,\bar C_1)=1$$
$$T_2(\bar X_2,\bar C_2)=1$$
$$\vdots$$
$$T_m(\bar X_m,\bar C_m)=1,$$
where $X$ is a disjoint union of $\bar X_1,\ldots ,\bar X_m$ and the group of level $i$ is defined as $G[\bar X_i,\ldots,\bar X_m, A]/R_G(T_i,\ldots,T_m)$.

\end{definition}

Notice that it may be assumed that every subsystem of form (I) consists of  quadratic equations in standard form. Also, it can be checked directly that $G_i\simeq G_{i+1}[X_i]/R_{G_{i+1}}(S_i)$. $S(X,A)=1$ is called \emph{non-degenerate triangular quasi-quadratic over $G$} or $G$-NTQ if it is $G$-TQ and for every $i$, the system $S_i(X_i,C_i)=1$ has a solution in $G_{i+1}$, and if $S_i$ is of form (II) the set $U$ generates a centralizer in $G_{i+1}$. A \emph{regular} $G$-NTQ system is a $G$-NTQ system in which each non-empty quadratic equation $S_i$ is in standard form, and either $\chi(S_i)\leq -2$ and the quadratic equation has a non-commutative solution in $G_{i+1}$, or it is an equation of the form $[x,y]d=1$ or $[x_1,y_1][x_2,y_2]=1$.

Finally a group is called a \emph{(regular) $G$-NTQ group} if it is isomorphic to the coordinate group of a (regular) $G$-NTQ system of equations. Note that in Sela's work \cite{selaTarski}, \emph{$\omega$-residually free towers} provide an analogous structure to $F$-NTQ groups, and in the work of Casals-Ruiz and Kazachkov \cite{CRK}, \emph{graph towers} are, in a certain sense, higher dimensional analogues of NTQ systems for working with right angle Artin groups. 

 Every $\Gamma$-NTQ group  is toral relatively hyperbolic \cite{KMlyndon}. 

\subsection{Graphs of groups}\label{subsec:graphs-grps}
A \emph{graph of groups} $\mathcal{G}(X)$ is a connected graph $X(V,E)$ labeled with a group $G_v$ for each vertex $v\in V$, and a group $G_e$ with monomorphisms $\alpha_e:G_e\rightarrow G_{\partial_0(e)}$, $\beta_e:G_e\rightarrow G_{\partial_1(e)}$ for each edge $e\in E$ ($\partial_0(e)$ and $\partial_1(e)$ denote the initial and terminal vertices of $e$ respectively). Note that $X$ is considered to be a non-oriented graph, (i.e. there is an involution $\bar{}: E\to E$ with $\partial_0(e)=\partial_1(\bar e)$ for each $e\in E$), so the monomorphisms of a graph of groups must also satisfy $\alpha_e(G_e)=\beta_{\bar e}(G_{\bar e})$.

Let $T$ be a maximal subtree of $X$. The \emph{fundamental group of $X(V,E)$} with respect to $T$ is the group $\pi(\mathcal{G}(X),T)$ which is generated by $ \langle*_{v\in V}G_v,*_{e\in E}t_e\rangle$ with relations $\{t_e=1\forall e\in T,t_et_{\bar e}=1\forall e\in E,t_{\bar e}\alpha(g)t_e=\beta(g)\forall g\in G_e,\forall e\in E\}$. Any choice of maximal subtree gives an isomorphic fundamental group of the graph of groups.
A \emph{splitting} of a group $G$ over some class of group $\mathcal{E}$ is an isomorphism from $G$ to $\pi(\mathcal{G}(X(V,E)),T)$ where each $G_e$ is in $\mathcal{E}$. Splittings are discussed in further detail in Section \ref{section:canonical-splittings}. 

Graphs of groups are closely related to the Bass-Serre theory of groups acting without inversion on trees. We note here one significant consequence which will be used later. 

\begin{prop}\label{prop:BSsubgrp}(see \cite{cohen}, Theorem 3.7 in \cite{SW79})
Given $G$ the fundamental group of a graph of groups, and $H\leq G$. Then $H$ is the fundamental group of the induced graph of groups with vertex and edge groups given by intersections of $H$ with conjugates of vertex and edge groups of $G$, respectively.
\end{prop}

\section{Iterated generalized doubles}\label{sec:IGD}
We can now prove Theorem \ref{Gamma-IGD}. We start with the following useful elementary example of a generalized double.

\begin{lemma}\label{extCent}
A free rank one extension of centralizer over $\Gamma_1$, where $\Gamma_1\leq\Gamma$, is a generalized double over $\Gamma$.
\end{lemma}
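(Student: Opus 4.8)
The plan is to verify directly that $G = \Gamma *_{C_\Gamma(U)}$ satisfies the two conditions in the definition of a generalized double, with $L = G$ itself playing the role of the target $\Gamma$-limit group. First I would observe that $G$ is an HNN extension $\Gamma *_C$ with associated subgroup $C = C_\Gamma(U)$, the centralizer in $\Gamma$ of a (finite) subset $U$; since $\Gamma$ is torsion-free hyperbolic, $C$ is infinite cyclic (centralizers of nontrivial elements in a torsion-free hyperbolic group are cyclic), hence a non-trivial abelian group. This also makes $G$ a free rank-one extension of a centralizer over $\Gamma$, so by Theorem~\ref{th4} (or directly, since such extensions are among the defining operations for iterated extensions of centralizers over $\Gamma$) $G$ is a $\Gamma$-limit group; in particular $G$ is finitely generated, and the single vertex group $\Gamma$ is finitely generated.

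Next I would check condition (1): the image of $C$ in the vertex group $\Gamma$ is $C_\Gamma(U)$ itself, and I claim this is a maximal abelian subgroup of $\Gamma$. Indeed, in a torsion-free hyperbolic group the centralizer of any non-trivial element is the unique maximal cyclic (hence maximal abelian) subgroup containing it; since $U$ is non-trivial, $C_\Gamma(U)$ is contained in, and in fact equals, such a maximal abelian subgroup (if $U=\{u\}$ this is immediate; for general $U$ one takes the maximal cyclic subgroup containing any non-trivial element of $U$ and checks it centralizes all of $U$). So the image of the edge group is maximal abelian in the vertex group, as required. (The stable letter $t$ is not required to normalize or centralize anything beyond the defining relation, so there is nothing further to check here.)

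For condition (2) I would exhibit the epimorphism $\phi: G \to L$ with $L = G$ simply as the identity map: it is an epimorphism, and it is injective on the (unique) vertex group $\Gamma$ since $\Gamma$ embeds in the HNN extension $G$ by the standard Bass--Serre/Britton's lemma normal form. Thus both conditions hold with $L = G$, which is itself a $\Gamma$-limit group as noted above, so $G$ is a generalized double over the $\Gamma$-limit group $G$. I do not expect any serious obstacle here; the only point requiring a little care is the maximality claim for $C_\Gamma(U)$ in $\Gamma$ — one must invoke the structure of centralizers in torsion-free hyperbolic groups (every non-trivial element lies in a unique maximal cyclic subgroup, which is its centralizer) rather than treat it as obvious — and the harmless observation that we are allowed to take $L$ equal to $G$ in the definition, since the definition only demands that $L$ be a $\Gamma$-limit group and places no properness requirement on $\phi$.
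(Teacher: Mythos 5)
Your argument has a genuine gap in its choice of the target group $L$. The lemma claims that $G$ is a generalized double \emph{over $\Gamma$}, and in the definition ``A group $G$ is a generalized double over a $\Gamma$-limit group $L$ if \dots there is an epimorphism $\phi:G\rightarrow L$ injective on each vertex group'' the phrase ``over $\Gamma$'' pins down $L=\Gamma$. By taking $L=G$ with $\phi=\mathrm{id}$ you establish only that $G$ is a generalized double over the $\Gamma$-limit group $G$ itself, which is a different (and essentially vacuous) statement: any HNN extension with maximal-abelian edge group is trivially a generalized double over itself via the identity. That version is useless for building the class $\mathcal{IGD}$-$\Gamma$, which is the smallest class containing $\Gamma$ and closed under free products and generalized doubles over groups \emph{already in the class}; taking $L=G$ would require $G$ to already be known to lie in $\mathcal{IGD}$-$\Gamma$, which is circular. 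The paper's proof instead uses the retraction $\phi:G\to\Gamma$ fixing $\Gamma$ pointwise and sending $t\mapsto 1$; this is a well-defined epimorphism onto $\Gamma$ (the relations $[c,t]=1$ map to trivialities) and is injective on the single vertex group $\Gamma$ by Britton's lemma, which completes the verification with the correct target $L=\Gamma$.

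Your treatment of condition (1) — that $C_\Gamma(U)$ is maximal abelian in $\Gamma$, spelling out that centralizers of nontrivial elements in a torsion-free hyperbolic group are (infinite cyclic and) maximal abelian — is fine and is a more explicit justification than the paper's ``by definition a centralizer is maximal abelian in $\Gamma$.'' So the fix is local: keep your maximality argument, but replace $L=G$ with $L=\Gamma$ and replace the identity map with the retraction killing $t$.
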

\begin{proof}
Given $G=<\Gamma _1,t|[C_{\Gamma_1}(U),t]=1>=\Gamma_1*_{C_{\Gamma_1}(U)}$, let $\phi:G\rightarrow\Gamma_1$
\begin{displaymath}
\phi(x) = \left\{
\begin{array}{lr}
x &; x \in \Gamma_1\\
1 &; x=t
\end{array}
\right.
\end{displaymath}

$\phi$ is injective on $\Gamma_1$ and by definition a centralizer is maximal abelian in $\Gamma _1$.
\end{proof}

With Proposition \ref{prop:iter-ext}, this is enough to prove one direction of Theorem \ref{Gamma-IGD}.

\begin{prop}Let $G$ be a generalized double over a $\Gamma$-limit group $L$. Then $G$ is a $\Gamma$-limit group.
\end{prop}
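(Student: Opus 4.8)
The plan is to use the characterization of $\Gamma$-limit groups as finitely generated groups that are fully residually $\Gamma$, and to build, for each finite subset $A \subset G$ avoiding $1$, a homomorphism $G \to \Gamma$ separating the elements of $A$ from the identity. The key tool is the epimorphism $\phi : G \to L$ which is injective on each vertex group, combined with the fact that $L$, being a $\Gamma$-limit group, is itself fully residually $\Gamma$; the difficulty is that $\phi$ need not be injective on all of $G$, so we must first modify $G$ by a ``twisting'' or ``stretching'' automorphism along the edge group $C$ to push the finitely many bad elements into a region where $\phi$ becomes injective.

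Concretely, I would proceed as follows. First, treat the amalgamated product case $G = A *_C B$ (the HNN case $G = A*_C$ is entirely analogous). Given a finite set $A_0 \subset G \setminus \{1\}$, put each element in normal form with respect to the splitting. Since $C$ is abelian and its images are maximal abelian in the vertex groups, one can choose an element $c \in C$ of infinite order (C is non-trivial and, being maximal abelian in a torsion-free hyperbolic vertex group, infinite) and consider the family of endomorphisms/automorphisms of $G$ obtained by conjugating the $B$-factor by powers $c^n$ — more precisely, one uses that $G = A *_C B$ acts on its Bass--Serre tree and that conjugating by large powers of $c$ along the edge stabilized by $C$ separates the syllables of any fixed finite family of normal forms. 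The standard fact (a Baumslag-type lemma, used exactly this way for free constructions) is that for all sufficiently large $n$, the composition $\psi_n : G \xrightarrow{\text{twist by } c^n} G \xrightarrow{\phi} L$ is injective on the finite set $A_0$: the twisting spreads out the syllables so that no cancellation in $L$ can collapse a reduced word of syllable length $\geq 2$, while syllables lying in a single vertex group survive because $\phi$ is injective there. Then, since $L$ is fully residually $\Gamma$, choose $\theta : L \to \Gamma$ nontrivial on the finite set $\psi_n(A_0) \subset L \setminus\{1\}$, and $\theta \circ \psi_n : G \to \Gamma$ is the desired homomorphism. As $A_0$ was arbitrary, $G$ is fully residually $\Gamma$, hence a $\Gamma$-limit group by the equivalences of Section~\ref{section:limit-grps} (using that $\Gamma$, being hyperbolic, is equationally Noetherian).

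The main obstacle is establishing the separation-by-twisting step rigorously: one must argue that conjugating one factor by a high power of a fixed infinite-order central-in-its-vertex element $c \in C$, followed by $\phi$, is injective on a prescribed finite set. This requires the normal form theorem for amalgamated products/HNN extensions together with the observation that $\phi$ maps the edge group $\langle c \rangle$ isomorphically (it is injective on each vertex group, and $C$ lies in a vertex group), so $\phi(c)$ again has infinite order; then a cancellation/ping-pong argument in $L$ shows that a reduced expression cannot evaluate to $1$ once the syllables are sufficiently ``stretched.'' I would also need to note that the twist map is a genuine automorphism of $G$ (it is — conjugation of a free factor in an amalgam over a subgroup by an element normalizing the edge group extends to an automorphism, or more simply one verifies it respects the defining relations), so that it does not introduce unwanted kernel. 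Everything else — the passage from ``fully residually $\Gamma$'' to ``$\Gamma$-limit group,'' and finite generation of $G$ (immediate since $A$ and $B$ are finitely generated) — is routine given the results already quoted.
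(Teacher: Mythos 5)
Your proposal takes a genuinely different route from the paper. The paper's proof is an \emph{embedding} argument: it takes the maximal abelian subgroup $\hat C\leq L$ containing $C^\phi$, forms the rank-one extension of centralizer $\hat L=\langle L,t\mid [\hat C,t]=1\rangle$, and shows by a normal-form computation that $G\cong A^\phi *_{C^\phi}(B^\phi)^t\leq\hat L$. Since $\hat L$ is again a $\Gamma$-limit group by Theorem~\ref{th4} (it embeds into an iterated extension of centralizers over $\Gamma$), the conclusion follows immediately; no discriminating family is ever constructed. Your proof instead aims to produce, for each finite set, a discriminating map $G\to L\to\Gamma$ by pre-composing $\phi$ with a high power of the Dehn twist along $C$ and invoking a Baumslag-type ``big powers'' lemma in $L$. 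That is a legitimate and classical alternative strategy, and the non-commutativity conditions you need ($\phi(a_i),\phi(b_i)\notin C_L(\phi(c))$ for middle syllables) do follow from injectivity of $\phi$ on vertex groups plus maximality of the images of $C$ plus the CSA property of $L$, essentially as you sketch.

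The gap is exactly the one you flag yourself, and it is not a routine verification. Your argument hinges on a big powers property for the $\Gamma$-limit group $L$: for $c'\in L$ of infinite order and $u_1,\dots,u_{k-1}\notin C_L(c')$, one needs $u_0 c'^{n_1}u_1\cdots c'^{n_k}u_k\neq 1$ for all sufficiently large $|n_i|$. This is true for $\Gamma$-limit groups, but it is not an elementary normal-form fact; the standard way to prove it is precisely to embed $L$ into an iterated extension of centralizers over $\Gamma$ (or an NTQ/toral relatively hyperbolic overgroup) and use the big powers property there. So your route does not actually avoid the embedding theorem — it just uses it implicitly inside the black-boxed lemma, while the paper uses it directly and sidesteps big powers altogether. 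As written, your proof should either cite a reference for big powers in $\Gamma$-limit groups or derive it from Theorem~\ref{th4}, at which point the paper's direct embedding of $G$ into $\hat L$ is the shorter argument. One small inaccuracy: you justify that $C$ contains an infinite-order element by calling the vertex groups ``torsion-free hyperbolic'', but $A$ and $B$ are merely finitely generated subgroups of a $\Gamma$-limit group (not necessarily hyperbolic); torsion-freeness of $L$ is what you should invoke. Finally, your treatment of the HNN case as ``entirely analogous'' glosses over the point, noted in the paper, that the two edge-group images in the base may only be conjugate rather than equal, which is where commutative transitivity of $L$ is needed.
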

\begin{proof}First consider the case where $G$ is an amalgamated product. Let $\phi$ be the map from $G$ to $L$ as in the definition of generalized double. Denote the images of $A,B,C$ under $\phi$ by $A^{\phi}$, $B^{\phi}$, and $C^{\phi}$ respectively. Let $\hat{C}$ be the maximal abelian subgroup of $L$ containing $C^{\phi}$. Consider the group $\hat{L}=<L,t|[c,t]=1,c\in\hat{C}>$. Since $L$ embeds into an iterated centralizer extension of $\Gamma$, so does $\hat{L}$. Let $\hat{G}=A^{\phi}*_{C^{\phi}}(B^{\phi})^t$; clearly $\hat{G}\leq\hat{L}$. We claim that $G\cong\hat{G}$. 

This can be showed using normal forms. In particular, the maps $\hat{\phi|_A}:A\rightarrow\hat{G}$ and $\hat{\phi|_B}:B\rightarrow\hat{G}$ defined by composing the restrictions of $\phi$ to $A$ and $B$ respectively with the natural embedding of $A^{\phi}$ and $B^{\phi}$ into $\hat{G}$ (and in the case of $\hat{\phi|_B}:B\rightarrow\hat{G}$, composing with conjugation by the stable letter $t$ in between), are injective (only the conjugation need be checked, and this follows since $t$ is stable letter for $\hat{L}$) by Theorem 1.6 of \cite{SW79}. So define $\hat{\phi}:G\rightarrow\hat{G}$ by $a_1b_1\ldots a_nb_n\mapsto a_1^{\phi}t^{-1}b_1^{\phi}t\ldots a_n^{\phi}t^{-1}b_n^{\phi}t$ for each word in $G$ in normal form ($a_i,b_i\notin C$). Since if $b_i\in C$ then $t^{-1}b_i^{\phi}t=b_i^{\phi}$ and $a_i\notin C$ so $a_i^{\phi}\notin\hat{C}$, we have $a_1^{\phi}t^{-1}b_1^{\phi}t\ldots a_n^{\phi}t^{-1}b_n^{\phi}t$ in normal form. Since every element of $\hat{G}$ has normal form of $a_1^{\phi}t^{-1}b_1^{\phi}t\ldots a_n^{\phi}t^{-1}b_n^{\phi}t=\phi{a_1b_1\ldots a_nb_n}$, $\phi$ is an isomorphism. Then $G$ embeds into $\hat{L}$ and so by Proposition \ref{prop:iter-ext}, $G$ is a $\Gamma$-limit group.
The case for $G=A*_C$ is similar, since $\hat{G}=A^{\phi}*_{C^\phi}$ (and using the fact that if images of $C^\phi$ don't coincide then they must be conjugate by commutative transitivity) again embeds into $\hat{L}$.
\end{proof}

To prove the converse, we use the Bass-Serre tree.

\begin{prop}\label{construct:IGD}Every $\Gamma$-limit group can be constructed by iterated generalized doubles over $\Gamma$.
\end{prop}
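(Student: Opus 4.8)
The plan is to induct on the complexity of a $\Gamma$-limit group $G$, using the structure theory of iterated extensions of centralizers and Bass–Serre theory. By Theorem \ref{th4}(ii), $G$ embeds into some group $\hat\Gamma$ obtained from $\Gamma$ by finitely many extensions of centralizers; let the number of such extensions be the primary complexity parameter, and for fixed ambient $\hat\Gamma$ induct secondarily on, say, the rank of $G$ (or on the sum of ranks of the freely indecomposable free factors of $G$). The base case is $G \le \Gamma$, which is itself a torsion-free hyperbolic group, hence a $\Gamma$-limit group lying in $\mathcal{IGD}$–$\Gamma$ by definition (any finitely generated subgroup of $\Gamma$ is again in the class; one may need the fact that f.g. subgroups of hyperbolic groups are themselves built up trivially as members of the class, or simply note $\Gamma\in\mathcal{IGD}$–$\Gamma$ and that the statement for subgroups follows since they are $\Gamma$-limit groups of strictly smaller complexity or are handled directly).

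First I would reduce to the freely indecomposable case: if $G = F_r * A_1 * \cdots * A_k$ is the Grushko decomposition, then each $A_i$ is again a $\Gamma$-limit group (subgroups of $\Gamma$-limit groups are $\Gamma$-limit groups), and since $\mathcal{IGD}$–$\Gamma$ is closed under free products and contains $\mathbb{Z}$ (as a rank-one extension of centralizer, or as a subgroup of $\Gamma$) and $\Gamma$, it suffices to show each freely indecomposable $A_i$ is in $\mathcal{IGD}$–$\Gamma$. So assume $G$ is freely indecomposable and $G \le \hat\Gamma = \langle \hat\Gamma_0, t \mid [C_{\hat\Gamma_0}(u), t] = 1\rangle$, where $\hat\Gamma_0$ is obtained from $\Gamma$ by one fewer extension of centralizers. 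Now $\hat\Gamma$ splits as an amalgamated product (or HNN extension) over the abelian subgroup $A = C_{\hat\Gamma_0}(u) \times \langle t\rangle$ of the extended centralizer, with $\hat\Gamma_0$ as the (essentially) non-abelian vertex. Intersecting this splitting of $\hat\Gamma$ with the subgroup $G$ via Bass–Serre theory, $G$ acts on the corresponding Bass–Serre tree; since $G$ is freely indecomposable it either fixes a point — in which case $G$ is conjugate into $\hat\Gamma_0$ or into the abelian subgroup $A$, and we finish by induction (the ambient group has one fewer centralizer extension) or by $G$ being abelian hence a subgroup of an iterated centralizer extension of rank one type, which is in the class — or $G$ splits nontrivially as a graph of groups with abelian edge groups and vertex groups $G \cap (\hat\Gamma_0^g)$ and $G \cap A^g$, which are $\Gamma$-limit groups of strictly smaller complexity.

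The main work, and the main obstacle, is to promote this induced abelian splitting of $G$ into a \emph{generalized double} decomposition in the precise sense of the definition: one must arrange that the edge group is a nontrivial abelian group maximal in the adjacent vertex groups, and one must produce the retraction-type epimorphism $\phi: G \to L$ onto a $\Gamma$-limit group $L$ of smaller complexity that is injective on each vertex group. For the maximality of the edge group I would refine the splitting — collapsing or sliding edges and absorbing adjacent abelian pieces into a single maximal abelian vertex group, using commutative transitivity of $\Gamma$-limit groups (any two maximal abelian subgroups intersecting nontrivially coincide), exactly as in the free case treated by Champetier–Guirardel in \cite{markgrps}. For the epimorphism $\phi$, the idea is to use the stable letter $t$ of the outermost centralizer extension: the map $\hat\Gamma \to \hat\Gamma_0$ killing $t$ restricts to a homomorphism on $G$ whose image is a $\Gamma$-limit group inside $\hat\Gamma_0$ (so of smaller complexity), and by construction it is injective on the vertex group coming from $\hat\Gamma_0$; one checks, as in the proof of the previous proposition with the normal-form argument, that after the refinement it is injective on the other (abelian) vertex group as well, or adjusts $\phi$ by a conjugation-by-$t$ on that side to make it so. This $L := \phi(G)$ is a $\Gamma$-limit group by the induction hypothesis in $\mathcal{IGD}$–$\Gamma$, the vertex groups of $G$ are $\Gamma$-limit groups of smaller complexity hence in $\mathcal{IGD}$–$\Gamma$, and therefore $G$, being a generalized double over $L$ with vertex groups in the class, lies in $\mathcal{IGD}$–$\Gamma$. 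The delicate points to watch are: ensuring the complexity genuinely drops at each stage (so the induction is well-founded), handling the HNN case in parallel with the amalgam case, and the case where the induced splitting is trivial because $G$ meets a conjugate of the abelian subgroup $A$ — there $G$ is itself (virtually) abelian and one invokes Lemma \ref{extCent} together with closure of the class under the generalized-double construction.
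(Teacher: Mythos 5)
Your plan is essentially the paper's own proof: reduce to the freely indecomposable case via Grushko, embed $G$ into an iterated centralizer extension over $\Gamma$ by Theorem \ref{th4}(ii), induct on the number of extensions, and use Bass--Serre theory to induce on $G$ an abelian graph of groups whose edge groups (intersections with conjugates of the centralizer $C$) are maximal abelian in the vertex groups, with the retraction killing the stable letter supplying the epimorphism required of a generalized double. The paper works directly with the HNN presentation $E=L*_C$ and phrases maximality as ``conjugates of centralizers are centralizers, and intersecting with $G$ preserves maximality,'' which is the same commutative-transitivity fact you invoke; the only slip in your write-up is calling $A=C\times\langle t\rangle$ the edge group of the amalgam $\hat\Gamma_0*_C(C\times\langle t\rangle)$, when the edge group there is $C$.
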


\begin{figure}[ht!]
\labellist
\small\hair 2pt
\pinlabel $\textcolor{red}{L}$ at 114 36
\pinlabel $\textcolor{blue}{C}$ at 143 76
\pinlabel $\textcolor{red}{tL}$ at 174 113
\pinlabel $\textcolor{blue}{g_1C}$ at 124 96
\pinlabel $\textcolor{red}{g_1tL}$ at 133 138
\pinlabel $\textcolor{blue}{g_nC}$ at 75 80
\pinlabel $\textcolor{red}{g_ntL}$ at 44 106
\pinlabel $\textcolor{blue}{tC}$ at 208 102
\pinlabel $\textcolor{red}{t^2L}$ at 243 84
\pinlabel $\textcolor{blue}{t^2C}$ at 257 58
\pinlabel $\textcolor{red}{t^3L}$ at 265 30
\pinlabel $\textcolor{red}{t^2g_1tL}$ at 311 56
\pinlabel $\textcolor{blue}{t^2g_1C}$ at 288 67
\pinlabel $\textcolor{red}{tg_1tL}$ at 248 145
\pinlabel $\textcolor{blue}{tg_1C}$ at 219 133
\pinlabel $\textcolor{red}{tg_1t^2L}$ at 309 124
\pinlabel $\textcolor{blue}{tg_1tC}$ at 280 136
\pinlabel $\textcolor{red}{tg_1tg_2tL}$ at 293 186
\pinlabel $\textcolor{blue}{tg_1tg_2C}$ at 302 163
\pinlabel $\textcolor{red}{g_1t^2L}$ at 118 240
\pinlabel $\textcolor{red}{g_1t^3L}$ at 194 236
\pinlabel $\textcolor{red}{g_1tg_2tL}$ at 54 208
\pinlabel $\textcolor{red}{g_1t^2g_2tL}$ at 206 266
\pinlabel $\textcolor{blue}{g_1tC}$ at 136 188
\pinlabel $\textcolor{blue}{g_1tg_2C}$ at 74 178
\pinlabel $\textcolor{blue}{g_1t^2C}$ at 159 220
\pinlabel $\textcolor{blue}{g_1t^2g_2C}$ at 145 260

\endlabellist
\centering
\includegraphics[scale=1]{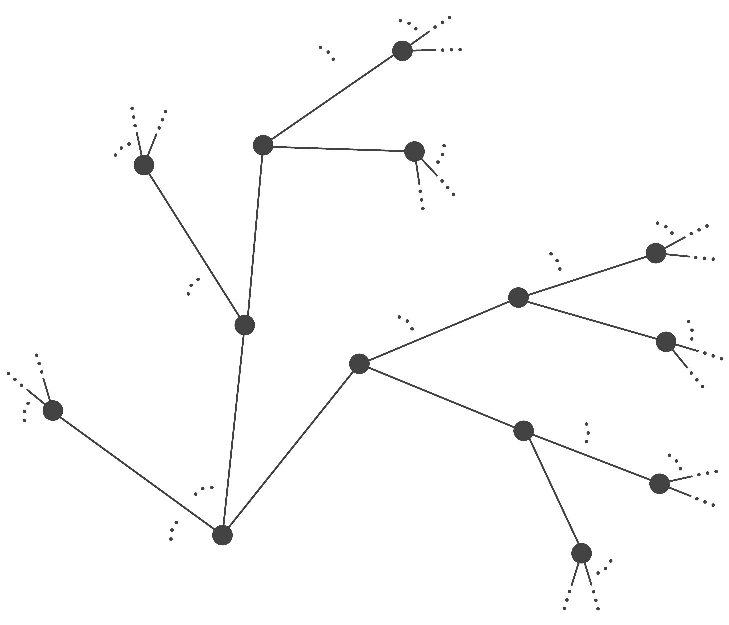}
\caption{Bass-Serre tree}
\label{fig:BStree}

\end{figure}
\begin{proof}
Given a freely indecomposable $\Gamma$-limit group $H$, by Proposition \ref{prop:iter-ext}, it may be embedded into an iterated extension of centralizers $G$ over $\Gamma$. $G$ is an iterated generalized double over $\Gamma$. We claim that every subgroup of $G$ is also an iterated generalized double.

Let $H$ be a subgroup of $G=<L,t|[C_L(U),t]=1>$ for some subset $U$ of an iterated extension of centralizers $L$. Let $C=C_L(U)$. So $G$ is an HNN extension $G=L*_C$. Since $\mathcal{IGD}-\Gamma$ is closed under free product by definition, assume $H$ is freely indecomposable. We will proceed by induction on the number of extensions of centralizers from which $G$ is formed, with the base case being $L=\Gamma$.

By Proposition \ref{prop:BSsubgrp}, $H$ acts (without inversions) on a tree with vertices given by the cosets $gL$ (stabilizers are the groups $L^g\cap H$), and edges $gC$ (corresponding to groups $C^g\cap H$), for each coset representative $g\in G$. In particular the initial vertex of the edge $gC$ is $gL$ and terminal vertex of $gC$ is $gtL$.

Every coset $gL$ has a representative of the form $t^{r_0}g_1t^{r_1}\ldots g_nt^{r_n}$ where the $g_i\in L$ are representatives for cosets of $C$ in $L$, $g_n\neq 1$ and the $r_i$ are non-zero except possibly $r_0$ and $r_n$. Now for each edge $t^{r_0}g_1t^{r_1}\ldots g_nt^{r_n}C$, the terminal vertex is $t^{r_0}g_1t^{r_1}\ldots g_nt^{r_n+1}L$ and the initial vertex is $t^{r_0}g_1t^{r_1}\ldots g_nt^{r_n}L$ if $r_n\neq 0$, and $t^{r_0}g_1t^{r_1}\ldots g_{n-1}t^{r_{n-1}}L$ if $r_n=0$. See Figure \ref{fig:BStree} (vertex cosets are in red, edge cosets in blue). We want to show that the corresponding edge group in the quotient graph of groups is maximal abelian in each of the corresponding vertex groups. Now $Stab(t^{r_0}g_1t^{r_1}\ldots g_nt^{r_n}L)=H\cap L^{t^{-r_n}g_n^{-1}\ldots g_1^{-1}t^{-r_0}}$ and $Stab(t^{r_0}g_1t^{r_1}\ldots g_nt^{r_n}C)=H\cap C^{t^{-r_n}g_n^{-1}\ldots g_1^{-1}t^{-r_0}}$.

Clearly $C^{t^{-r_n}g_n^{-1}\ldots g_1^{-1}t^{-r_0}}\leq L^{t^{-r_n}g_n^{-1}\ldots g_1^{-1}t^{-r_0}}$, and since $C^t=C$ we have $C^{t^{-r_n}g_n^{-1}\ldots g_1^{-1}t^{-r_0}}=C^{t^{-r_n-1}g_n^{-1}\ldots g_1^{-1}t^{-r_0}}\leq L^{t^{-r_n-1}g_n^{-1}\ldots g_1^{-1}t^{-r_0}}$.
\\Finally, $C^{g_n^{-1}t^{-r_{n-1}}\ldots g_1^{-1}t^{-r_0}}\leq L^{g_n^{-1}t^{-r_{n-1}}\ldots g_1^{-1}t^{-r_0}}= L^{t^{-r_{n-1}}\ldots g_1^{-1}t^{-r_0}}$.

A conjugate of a centralizer is again a centralizer in the corresponding conjugate of the ambient group (this can be easily checked directly), and centralizers are maximal abelian. Intersecting a maximal abelian subgroup and it's ambient group with $H$ preserves maximality of the abelian subgroup (again, this can be easily checked directly). So there's a splitting of $H$ with underlying graph $D$ having (non-trivial) abelian edge groups $H_{e_1},\dots,H_{e_m}$ each maximal in adjacent vertex groups.

First, assume $D$ is a tree with vertex groups $H_1=L^{g_1}\cap H,\dots,H_{m+1}=L^{g_{m+1}}\cap H$, for $g_1,\dots,g_{m+1}\in G$ and edge groups $C_1,\dots,C_m$, $C_i\leq C$, with $C_i$ mapping into an adjacent vertex group $H_j$ by $C_i^{g_j}$. Assume $H_1$ corresponds to a leaf of $D$, and that $H_1,\dots,H_m$ are ordered so that each $H_i$ is a leaf of the subtree of $D$ obtained by removing $H_1,\dots,H_{i-1}$, for $i=2,\dots,m-1$.

For $i=0,\dots,m$, let $\hat H_i=\langle H_{i+1}^{g_{i+1}^{-1}},\dots,H_m^{g_m^{-1}}\rangle\leq L$ be the subgroup of $L$ generated by $H_{i+1}^{g_{i+1}^{-1}},\dots,H_m^{g_m^{-1}}$. For new stable letters $t_1,\dots,t_m$, let \[G_i=\langle L,t_1,\dots,t_i|[C,t_1]=\dots=[C,t_i]=[t_j,t_{j'}]=1,1\leq j,j'\leq i\rangle\]
and now consider $H_1*_{C_1} \hat H_1^{t_1}$ with $C_1\mapsto C_1^{g_1}\leq H_1$ and $C_1\mapsto C_1=C_1^{t_1}\leq H_2^{g_2^{-1}t_1}\leq \hat H_1^{t_1}\leq G_1$. 

We claim that $H_1*_{C_1} \hat H_1^{t_1}$ is a generalized double over $\hat H_0\leq L$. From above, $C_1^{g_1}$ and $C_1^{g_2}$ are maximal abelian in $H_1$ and $H_2$ respectively, so $C_1=C_1^{g_2g_2^{-1} t_1}$ is maximal abelian in $H_2^{g_2^{-1} t_1}\leq \hat H_1^{t_1}$ and so maximal in $\hat H_1^{t_1}$. %since otherwise it would be contained in a strictly larger subgroup of 
Conjugation of $H_1$ by $g_1^{-1}$ and of $\hat H_1$ by $t_1^{-1}$ map into $\hat H_0$ and agree on the respective images of $C_1$, so determine (up to isomorphism) $\phi_1: H_1*_{C_1} \hat H_1^{t_1}\to \hat H_0$. Clearly $\phi_1$ is an epimorphism and is injective in restriction to either $H_1$ or $\hat H_1$. 

Similarly, for $i=2,\dots m-1$ \[\left(H_1*_{C_1}H_2^{t_1}*_{C_2}*\dots*_{C_{i-1}}H_i^{t_1\cdots t_{i-1}}\right)*_{C_i} \hat H_i^{t_1\cdots t_i}\] is a generalized double over \[\left(H_1*_{C_1}H_2^{t_1}*_{C_2}*\dots*_{C_{i-2}}H_{i-1}^{t_1\cdots t_{i-2}}\right)*_{C_{i-1}} \hat H_{i-1}^{t_1\cdots t_{i-1}}\]
with the epimorphism determined by the identity on $H_1,H_2^{t_1},\dots,H_{i-1}^{t_1\dots t_{i-2}}$, conjugation of $H_i^{t_1\dots t_{i-1}}$ by $\left(g_i t_1\dots t_{i-1}\right)^{-1}$ and of $\hat H_i^{t_1\dots t_i}$ by $\left(t_1\dots t_i\right)^{-1}$.

Now $\hat H_{m-1}=H_m^{g_m^{-1}}$ and $H\cong H_1*_{C_1} H_2^{t_1}*_{C_2}\cdots*_{C_m}  H_{m}^{g_m^{-1}t_1\cdots t_m}$ (transforming $D$ by sliding and conjugation), and so $H$ is formed by a sequence of generalized double constructions, starting with a subgroup of $L$, so $H$ is in $\mathcal{IGD}-L$.

Let $\bar H$ be the image of $H$ in $L$ when $t$ is mapped to the identity. In the general case we will use induction on the number of edges in $D$. If there are no edges corresponding to HNN extensions, then the statement has been already proved. Suppose 
there is a loop $v_1-v_2-\ldots v_m-v_1$ in the graph $D$  with corresponding  vertex groups 
$$H_1 *_{(C_1^s=C_2)} H_2  *_{C_3}\ldots H_m *_{C_{m+1}} H_1,$$ where $s$ is a stable letter corresponding to the edge  $e=(v_1, v_2)$.    Let $\bar H_i$ be the image of $H_i$ in $\bar H$ and $\hat H$ be the image of the fundamental group of the graph of groups obtained from $D$ by  removing $e$. Consider an extension of centralizer $L_1=\langle \bar H, t_1|[C_1,t_1]=1\rangle$ and let $L_2$ be the subgroup of $L_1$ obtained as a generalized double over $\bar H$ corresponding to the HNN-extension $\langle \hat H, t_1s| C_1^{t_1s}=C_2\rangle.$

We can now consider $H$ as the fundamental group of  the new graph $\bar D$ obtained from $D$ by collapsing  the edge $e$.  We define  the epimorphism  $\phi (H)\rightarrow L_2$ as follows: $\phi (H_1)=\bar H_1^{ts}$, $\phi (H_i)=\bar H_i$ for $i>1$,  and stable letters are mapped naturally (the stable letter $t_1s$ will now correspond to the edge $(v_n,v_1)$). Then  the group $H$ is the fundamental group of the graph of groups $\bar D$ with less edges, it admits the epimorphism $\phi$ to $L_2$ that maps the vertex groups including $H_1*_{C_1}H_2$ monomorphically. 
Therefore, by induction on the number of edges in $D$, $H$ is obtained by a sequence of generalized doubles from $L_2$, which is a generalized double over $\bar H$. Therefore $H\in \mathcal{IGD}-L$.  We still have to consider the case when $D$ has only one vertex, that is $H$ is a multiple HNN extension $H=\langle H_1,t_1,\ldots ,t_k| [C_i,t_i]=1\rangle$  that is in $\mathcal{IGD}-L$ by Lemma \ref{extCent}. In this case we take $\langle H_1,t_1| [C_1,t_1]=1\rangle$ as a new vertex group and  $\bar D$ is obtained from $D$ by collapsing $e_1$.

The case $L=\Gamma$ puts subgroups of a single extension of centralizer of $\Gamma$ in the class $\mathcal{IGD}-\Gamma$. Then considering an extension of centralizer $G=\langle L, t|[C,t]=1\rangle$ over an iterated extension of centralizers $L$ (which starts with $\Gamma$), where all subgroups of $L$ are in $\mathcal{IGD}-\Gamma$, we've showed that subgroups of $G$ are in $\mathcal{IGD}-L$, in which case they are then in $\mathcal{IGD}-\Gamma$.

\end{proof}

%Theorem \ref{Gamma-IGD} follows (since free products of $\Gamma$ limit groups are also $\Gamma$-limit groups).

\section{JSJ decompositions}\label{section:canonical-splittings}

In this section we describe properties of JSJ decompositions of groups. We refer to \cite{RS97}, \cite{DS}, \cite{FP06}, and \cite{GL} for further background on other notions of JSJ decompositions for groups.

Recall that an abelian splitting of a group $G$ is an isomorphism to the fundamental group of a graph of groups with all abelian edge groups. It is often convenient to slightly abuse terminology and allow a splitting to refer to the graph of groups itself. An \emph{elementary} splitting is one in which the graph of groups has exactly one edge. A splitting is \emph{reduced} if the image of each edge group is a proper subgroup of the corresponding vertex group. A splitting of a group $G$ is \emph{essential} if it is reduced, all edge groups are abelian and for each edge group $E$, if $\gamma\in G,\gamma^k\in E$ for some $k\neq 0$, then $\gamma\in E$. 

A subgroup $H\leq G$ is said to be \emph{elliptic} with respect to a given splitting, if it is conjugate to a subgroup of some vertex group of the splitting. If $H$ is not elliptic, then it is called \emph{hyperbolic} with respect to the splitting. 

There are certain elementary transformations of graphs of groups which preserve the fundamental group.

An \emph{unfolding} of an elementary splitting $G\cong A*_CB$ is another splitting $G\cong A*_{C_1}B_1$ where $C_1$ is a proper subgroup of $C$ and $B=C*_{C_1}B_1$. An \emph{unfolding} of an elementary splitting $G\cong A*_C$ is another splitting $G\cong A_1*_{C_1}$ where the image of $C_1$ in $A_1$ is a proper subgroup of $C$, and $A=A_1*_{C_1}(tCt^{-1})$ where $t$ is the stable letter of $A*_C$. We say a splitting is \emph{unfolded} if there is no unfolding of any induced elementary splitting.

Vertex groups may be divided into three classes. 
\begin{enumerate}
\item \emph{Abelian} vertex groups

\item \emph{Quadratically hanging} vertex groups. A vertex group $G_v$ is quadratically hanging (or QH), if it admits either of the presentations (i.e. it is the fundamental group of a surface with finitely many punctures):
\begin{enumerate}[(i)]
\item $\displaystyle\langle a_1,\ldots,a_g,b_1,\ldots,b_g|\prod_{i=1}^{g}[a_i,b_i]\prod_{j=1}^{m}p_j\rangle$ where  $g\geq0,m\geq 1$, and if $g=0$ then $m\geq 4$
\item $\displaystyle\langle a_1,\ldots,a_g,p_1,\ldots,p_m|\prod_{i=1}^{g}a_i^2\prod_{j=1}^{m}p_j\rangle$ where $g\geq1,m\geq1$

\end{enumerate}
and furthermore:
\begin{itemize}
\item for each edge group $G_e$ with $\partial_0(e)=v$, $\alpha_e(G_e)$ is conjugate to $\langle p_j\rangle$ in $G_v$ for some $j=1,\ldots,m$
\item and for each $j=1,\ldots,m$ there is some edge $e_j$ with $\partial_0(e_j)=v$ and $\alpha_{e_j}(G_{e_j})$ conjugate to $\langle p_j\rangle$ in $G_v$.
\end{itemize}
\item Vertex groups which are non-abelian and non-QH are called \emph{rigid} vertex groups.
\end{enumerate}

Any subgroup $H\leq G$ which is a QH vertex group in some splitting of $G$ is called a \emph{QH subgroup} of $G$. A QH subgroup $Q\leq G$ is a \emph{maximal quadratically hanging subgroup} (or MQH subgroup), if given any elementary splitting of $G$ with edge group $G_e$, either $Q$ is elliptic in that splitting, or $G_e$ can be conjugated into $Q$ and that elementary splitting is induced by splitting $Q$ along the image of $G_e$.

There are various definitions of splittings which are called JSJ decompositions (after the topological notion of decomposing 3-manifolds along essential tori due to Jaco, Shalen, and Jacobsen) for various classes of groups. All of them are canonical in that they encode all other splittings (of a certain type) in some sense. We will use some particular qualifications for the canonical decompositions of $\Gamma$-limit groups.

\begin{definition}Given a reduced unfolded  abelian splitting $D$ of a freely indecomposable $\Gamma$-limit group $L$, call $D$ a JSJ decomposition of $L$, if the following properties are satisfied:
\begin{enumerate}[(i)]
\item Every MQH subgroup of $L$ is conjugate to a vertex group of the $D$, (so every QH subgroup of $L$ can be conjugated into a vertex group of the JSJ), and every vertex group of the $D$ which is not conjugate to a MQH subgroup of L is elliptic in any abelian subgroup of $L$.
\item Any elementary abelian splitting of $L$ which is hyperbolic in another elementary abelian splitting, can be obtained from $D$ by the splitting of an MQH subgroup, which is induced by cutting the corresponding surface along an essential simple closed curve, and collapsing all other edges.
\item Any elementary abelian splitting of $L$ which is elliptic with respect to every other elementary elementary abelian splitting of $L$, can be obtained from $D$ by a sequence of collapsings, foldings, and conjugations.
\item Any two reduced unfolded abelian splittings of $L$ satisfying the above three properties can be obtained from one another by a sequence of slidings, conjugations, and modification of boundary monomorphisms by conjugations.
\item All non-cyclic abelian subgroups of $L$ are elliptic in $D$.
\end{enumerate}
\end{definition}

In \cite{selaHyp} (Theorem 1.10), every freely indecomposable $\Gamma$-limit group is shown to have such a JSJ decomposition.

\section{Hom-diagrams and $\Gamma$-NTQ groups}\label{section:homoms}
We present here some key constructions from \cite{KMlyndon} and \cite{embedExt}, along with generalizations of some  from \cite{elemFree} to the torsion-free  hyperbolic group case. Building on those foundations, for a finite system of equations over $\Gamma$ we construct a particular solution tree, while recalling the existence of  canonical Hom-diagrams. Then we establish some necessary language and tools to relate our tree to canonical Hom-diagrams: corrective extensions of $\Gamma$-NTQ groups, certain ``generic families" of solutions, and lifting of solutions using the Implicit Function Theorem.

Unless otherwise stated, for systems with coefficients in $\Gamma$, all homomorphisms in Hom-diagrams for the system are assumed to be $\Gamma$-homomorphisms.
\subsection{Canonical automorphisms}
Consider an elementary abelian splitting of a group $G$. If we have $G=A*_{C}B$, for $c\in C$ define an automorphism $\phi_c:G\rightarrow G$ such that $\phi_c(a)=a$ for $a\in A$ and $\phi_c(b)=b^c=c^{-1}bc$ for $b\in B$. If instead we have $G=A*_c$ then for $c\in C$ we define $\phi_c:G\rightarrow G$ such that $\phi_c(a)=a$ for $a\in A$ and $\phi_c(t)=ct$.

Call the $\phi_c$ a \emph{Dehn twist} obtained from the corresponding elementary abelian splitting of $G$. If $G$ is an $\Gamma$-group, where $\Gamma$ is a subgroup of one of the factors $A$ or $B$, then Dehn twists that fix elements of the  group $\Gamma\leq A$ are called \emph{canonical Dehn twists}. Similarly, one can define canonical Dehn twists with respect to an arbitrary fixed subgroup $K$ of $G$. Let $\mathcal{D}(G)$ [resp. $\mathcal{D}_{\Gamma}(G)$]  be the set of all abelian splittings of $G$ [resp. the set of all abelian splittings such that $\Gamma\leq G$ is elliptic].
\begin{definition}
Let $D\in \mathcal{D}(G)$ [$D\in \mathcal{D}_{\Gamma}(G)$] be an abelian splitting of a group $G$ and $G_v$ be either a QH or an abelian vertex of $D$. Then an automorphism $\psi\in Aut(G)$ is called a \emph{canonical automorphism} corresponding to the vertex $G_v$ if $\psi$ satisfies the following conditions:
\begin{enumerate}[(i)]
\item $\psi$ fixes (up to conjugation) element-wise all vertex group in $D$, other than $G_v$. If $\Gamma\leq G_v$ then $\psi$ also fixes each element of $\Gamma$. Note that $\psi$ also fixes (up to conjugation) all the edge groups.
\item If $G_v$ is a QH vertex in $D$, then $\psi$ is a Dehn twist (canonical Dehn twist) corresponding to some essential $\mathbb{Z}$-splitting of $G$ along a cyclic subgroup of $G_v$.
\item If $G_v$ is an abelian subgroup then $\psi$ acts as an automorphism on $G_v$ which fixes all the edge subgroups of $G_v$.
\end{enumerate}
\end{definition}
\begin{definition}
Let $e$ be an edge in an abelian splitting $D$ of a group $G$. Let $\psi\in Aut(G)$. Call $\psi $ a \emph{canonical automorphism corresponding to the edge $e$} if $\psi$ is a Dehn twist of $G$ with respect to the elementary splitting of $G$ along the edge $e$, induced from the splitting $D$. If $D\in \mathcal{D}_{\Gamma}(G)$, then $\psi$ must fix $\Gamma$ element wise.
\end{definition}
\begin{definition}The group of canonical automorphisms of a closed surface group $G$ with respect to a trivial splitting $D$ (i.e. $G$ is the only vertex group) is $A_D(G)=MCG(\Sigma)$ (the mapping class group of the surface).

Otherwise, the group of canonical automorphisms of a freely indecomposable group $G$ with respect to an abelian splitting $D$ is the subgroup $A_D(G)\leq Aut(G)$, generated by all canonical automorphisms of $G$ corresponding to all edges, all $QH$ vertices, and all abelian vertices of $D$. If $G$ is not a $\Gamma-group$ then include conjugation as well. The group of canonical automorphisms of a free product is the direct product of the canonical automorphism groups of factors.
\end{definition}

\subsection{Solution tree $\mathcal{T}(S,\Gamma)$}\label{section:reworking}
 
 \begin{definition} [{\bf fundamental sequence}] \label{non-can} Let $\Gamma _{R(S)}$ be a coordinate group over $\Gamma$.
Consider the following diagram \begin{equation}\label{canhom} \Gamma_{R(S)}\rightarrow _{\pi_0} G_1\rightarrow _{\pi_1}  G_2\rightarrow\ldots \rightarrow _{\pi_{n-2}}  G_{n-1}\rightarrow _{\pi_{n-1}}  \bar G_n\rightarrow _{\tau}G_{n}\leq F(Y)*\Gamma ,\end{equation}
where $Y=\{y_1,\ldots , y_m, \ldots, y_{m+k}\},$ $\bar G_n=F(y_1,\ldots ,y_m)\ast \Gamma \ast H_1\ast\ldots \ast H_k,$
\newline $G_{n}= F(y_1,\ldots ,y_m)\ast \Gamma \ast \Gamma ^{y_{m+1}}\ast\ldots \ast \Gamma ^{y_{m+k}} ,$
with the following properties:
\begin{enumerate}
\item $H_1,\ldots , H_k$ are  groups isomorphic to subgroups of $\Gamma$,  and $G_1,\ldots ,G_n$ are $\Gamma$-limit groups.
 
\item $\pi _0$ is a homomorphism, $\pi_i$ is a fixed proper epimorphism for $0<i<n-1$,  $\pi _{n-1}$ is an epimorphism, but  may not be proper, and $\tau$ is a fixed homomorphism, which is a monomorphism when restricted to $H_j$, and there is a new variable $y_{m+j}$ with $\tau (H_j)\leq \Gamma ^{y_{m+j}} $ for each $1\leq j\leq k.$
\end{enumerate}
 
 A {\em fundamental sequence}  of homomorphisms from $\Gamma _{R(S)}$ to $F(Y)\ast\Gamma$ corresponding to this diagram is the set of all homomorphisms that are compositions $$\pi_0\sigma_1\pi _1\ldots\sigma _{n-1}\pi _{n-1}\tau ,$$ where $\sigma _i$ is a canonical automorphism of $G_i$ corresponding to a Grushko decomposition of $G_i$ followed by some abelian decompositions of the freely indecomposable factors where  all non-cyclic abelian subgroups are elliptic. 
 
 A {\em fundamental sequence}  of homomorphisms from $\Gamma _{R(S)}$ to $\Gamma$  consists of all homomorphisms  from the fundamental sequence from $\Gamma _{R(S)}$ to $F(Y)\ast\Gamma$ post-composed with arbitrary homomorphisms $\delta$ that map $F(Y)$ into $\Gamma$.

\end{definition}
\begin{definition}  [{\bf strict fundamental sequence}]
A fundamental sequence defined above is called {\em strict}  if it has the following properties: \begin{enumerate} 
\item The image of each non-abelian vertex group of $G_i$ under $\pi _i$ is non-abelian.
\item For each $1\leq i< n$, $\pi_i$ is injective on rigid subgroups, edge groups, and subgroups generated by the images of edge groups in abelian vertex groups in $G_{i-1}$.
\item For each $1\leq i <n $, if $R$ is a rigid subgroup in $G_{i}$ and $\{A_j\}$, $1\leq j\leq m$, the abelian vertex groups in $G_{i}$ connected to $R$ by edge groups $E_j$ with the maps $\eta_j:E_j\rightarrow A_j$, then $\pi_i$ is injective on the subgroup $\langle R,\eta_1(E_1),\ldots,\eta_m(E_m)\rangle$ which we will call the {\em envelope of $R$}.
\item The images of different factors in the free decomposition of $G_i$ under $\pi _i$  are different factors in the free decomposition of $G_{i+1}$. 
\end{enumerate}
\end{definition}

All homomorphisms from $\Gamma _{R(S)}$ to $\Gamma$ belong to a finite number of strict fundamental sequences to $\Gamma$.  The union of their diagrams is called a Hom-diagram or Makanin-Razborov diagram. To each  strict fundamental sequence one  assigns a $\Gamma$-NTQ  group similarly to how it is done in \cite{elemFree} or \cite{selaHyp}.  
 \begin{lemma}(\cite{embedExt}, Lemma 3.15)\label{Lem:EmbeddingIntoCoordinateGroups} Let $\Gamma$ be a torsion-free hyperbolic group.
There is an algorithm that, given a finitely presented group $G=\langle Z\gst S\rangle$,
produces
\begin{romanenumerate}
\item finitely many fully residually $\Gamma$ groups $N_{1},\ldots,N_{m}$, and %$\Gamma$-NTQ systems $T_{1}(X_{1},A),\ldots, T_{m}(X_{m},A)$, and
\item homomorphisms $\alpha_{i}: G\rightarrow N_{i},$ $i=1,\ldots ,m.$
\end{romanenumerate}
such that for every homomorphism $\psi: G\rightarrow \Gamma$ there exists a homomorphism
$\hat{\phi}: N_{i}\rightarrow \Gamma$ such that $\psi=\alpha_{i}\hat{\phi}$.  Further, each group $N_{i}$ has the form $N_{i}=\Gamma_{R(T_{i})}$ where
$T_{i}$ is a $\Gamma$-NTQ system with the base group $\Gamma * F(Y)$ (or, more, precisely, $F(y_1,\ldots ,y_m)\ast \Gamma \ast \Gamma ^{y_{m+1}}\ast\ldots \ast \Gamma ^{y_{m+k}}\leq  \Gamma * F(Y)) ,$ where $Y$ is some finite set of variables, and the homomorphism $\hat{\phi}$ is a $\Gamma$-homomorphism that belongs to the strict fundamental sequence associated with $N_{i}$.

The lemma also holds in the category of $\Gamma$-groups, i.e. for systems of equations with constants over $\Gamma$.
\end{lemma} 
Noe we re-group into levels the equations of NTQ systems from Lemma  \ref{Lem:EmbeddingIntoCoordinateGroups}. For each  $\Gamma$-NTQ group $N_i=N=\Gamma _{R(T)}$   consider the partition (from the NTQ structure) of $T=1$ into subsystems $T_j=1$, $j=1,\ldots , n-1,$ as in Definition \ref{NTQ}, and let $L_j$ be the $\Gamma$-NTQ group of level $j$ that is the coordinate group of the union of $T_j=1,\ldots , T_{n-1}=1$ over the base group  $L_n=F(y_1,\ldots ,y_m)\ast\Gamma \ast \Gamma ^{y_{m+1}}\ldots  \ast \Gamma ^{y_{m+k}}.$ Here $N=L_1$ is the group of the top level.
We have
$N=L_1>L_2>\ldots >L_n$.  To each $L_j$ there is an associated splitting  $D_j$ that is a free decomposition followed by  abelian splittings of factors. In these abelian splittings QH subgroups correspond to quadratic equations in $T_j=1$, abelian vertex groups correspond to equations of type (II) and rigid vertex groups are non-cyclic factors in the free decomposition of  $L_{j+1}$. So, given a fixed NTQ structure for a group and taking canonical automorphisms  with respect to splittings associated to each level we obtain a strict fundamental sequence.

Applying this to the groups constructed in Lemma  \ref{Lem:EmbeddingIntoCoordinateGroups} we  define 
the  tree ${\mathcal T} (S, \Gamma)$ (which, basically, comes from the paper \cite{embedExt}).

\begin{definition} [{\bf tree ${\mathcal T} (S, \Gamma)$}] \label{KMac}  A {\em Solution tree}  for a system $S=1$ over $\Gamma$ is a directed rooted tree ${\mathcal T} (S, \Gamma)$, with vertices labeled by groups and edges labeled by homomorphisms. Each branch $b$, i.e. a path from the root vertex to a leaf, has the following form  $$\Gamma_{R(S)}\rightarrow _{\phi_b} N_b=L_1\rightarrow _{\psi_1}  L_2\rightarrow\ldots \rightarrow _{\psi_{n-1}}  L_n=F(y_1,\ldots ,y_m)\ast\Gamma \ast \Gamma ^{y_{m+1}}\ldots  \ast \Gamma ^{y_{m+k}}, $$  where $L_1,\ldots ,L_n$ are $\Gamma$-NTQ groups with the  base group $L_n$; $\phi _b$ is a fixed homomorphism; and  $\psi_j$ are fixed proper epimorphisms which are retractions onto $L_{j+1}$ for $0<j\leq n-1$. 

There is a {\em completed fundamental sequence} to $F(Y)\ast\Gamma$  and a completed fundamental sequence to $\Gamma$ assigned to each branch. The homomorphisms in the  fundamental sequence  to $F(Y)\ast\Gamma$ are compositions $\phi _b\sigma_1\psi _1\ldots\sigma _{n-1}\psi _{n-1}$ where $\sigma _j$ is a canonical automorphism with respect to the splitting associated  with  $L_j$ that is coming from the NTQ structure.  The homomorphisms in the  fundamental sequence to $\Gamma$ are obtained by post-composing  the homomorphisms in the fundamental sequence to $F(Y)\ast\Gamma$ with arbitrary  homomorphism  $\delta$  that maps $F(Y)$ into $\Gamma$.  

Every homomorphism from $\Gamma _{R(S)}$ to $\Gamma$ factors through one of the completed fundamental sequences to $\Gamma$ corresponding to the branches of ${\mathcal T} (S, \Gamma)$.
\end{definition}

Notice, that the base group $L_n$ for each branch is hyperbolic and every group $L_i$ is toral relatively hyperbolic.  A completed fundamental sequence is always strict.
\begin{lemma}\label{le:spleff}
The tree ${\mathcal T} (S, \Gamma)$ can be effectively constructed. In addition  one can 
assume that for each branch $b$, $\phi _b(\Gamma _{R(S)})$ is not conjugate into a factor in a non-trivial free decomposition of $N_b$ or into $\psi _1(N_b)$. 
We can further assume that there is no vertex group in the splitting corresponding to the top level of $N_b$ such that $\phi _b(\Gamma _{R(S)})$ is conjugate in the fundamental group of the graph of groups obtained by removing this vertex.  And for each level $L_j$  of $N_b$ we can assume the same about  $\psi _{j-1}\ldots \psi _1\phi _b(\Gamma _{R(S)})$ in $L_j$. \end{lemma}

\begin{proof} By Lemma \ref{Lem:EmbeddingIntoCoordinateGroups} the tree can be algorithmically constructed. By  \cite{GW}, Theorem  7.2 there is an algorithm to decide if  $\phi _b(\Gamma _{R(S)})$ is  conjugate into the $\Gamma$-NTQ group $L_2$ corresponding to the second level of the $\Gamma$-NTQ group $N_b$. If it is, we can just replace $N_b$ by $L_2$. For each level $j$ beginning with the top level we can similarly check  if there is a vertex group in the splitting corresponding to $L_j$  such that $\psi _{j-1}\ldots \psi _1\phi _b(\Gamma _{R(S)})$  is conjugate in the fundamental group of the graph of groups obtained by removing this vertex. If it is, we remove this vertex group from the graph of groups representing $L_j$.

\end{proof} 

We will need the following  lemma from \cite{effectiveJSJ}.

\begin{lemma}\label{le:qh} \label{le:spl} (Lemma 2.13 in \cite{effectiveJSJ}) 
Let $\Delta = \Delta (X) $ be a
cyclic {\rm[}abelian{\rm]} splitting of the group $G$, and $Q$ a QH-subgroup
in $\Delta$ associated with a vertex $v \in X$ with outgoing edges
$e_1, \ldots, e_m$ corresponding to boundary subgroups $\langle p_1\rangle ,\ldots  \langle p_m\rangle .$ Denote by $Y_1, \ldots, Y_k$ the connected
components of the graph $X \smallsetminus \{v, e_1, \ldots, e_m\}$
and by $P_1, \ldots, P_k$ - the fundamental groups of the graphs
of groups induced from $\Delta$ on $Y_1, \ldots, Y_k$. If $H$ is a
finitely generated non-cyclic subgroup of $G$ then one of the
following conditions holds:
 \be
 \item  $H$ is a nontrivial  free product;
 \item  $H \leq P_i^g$ for some $g \in G$ and $1 \leq i \leq k$;
 \item  $H$ is freely indecomposable, and for some $g\in G$
 the subgroup $H\cap Q^g$ has finite index in $Q^g$.
In this event $H \cap Q^g$ is a QH-vertex group in $H$. \ee

 If $H_Q=H\cap Q$ is non-trivial and has infinite index in $Q$,
 then $H_Q$ is a free product of some conjugates of $\langle p_1^{t
 _1}\rangle,\ldots ,\langle p_m^{t _m}\rangle,$ $t_i\in\mathbb Z$, and a free group $F_1$ {\rm(}maybe trivial{\rm)} which
 does not intersect any conjugate of $\langle p_i\rangle $ for
 $i=1,\ldots ,m.$

\end{lemma}

\subsection{Canonical $\Gamma$-NTQ groups}\label{Hom} 
In  the previous section we described the tree ${\mathcal T} (S, \Gamma)$ of  completed fundamental sequences (or a $Hom$-diagram) encoding all solutions of a finite system $S(Z,A)=1$ of equations over $\Gamma$. A {\em canonical $Hom$-diagram} is a tree of strict fundamental sequences such that the group of canonical automorphisms of each $G_i$ corresponds to the JSJ decomposition of $G_i$ (not to some splitting of $G_i$) and  each group $G_{i+1}$ is one of the maximal $\Gamma$-limit quotients of the 
 shortening quotient of $G_i$ (which is the quotient of $G_i$ over the intersection of the kernels of all minimal homomorphisms into $\Gamma$ with respect to $Aut_D(G_i)$, where $D$ is the JSJ decomposition of $G_i$). A canonical Hom-diagram is not unique. There is some freedom in the constructions, from \cite{selaHyp} and \cite{embedExt}, used to show the existence of such a diagram. While the initial step of forming strict fundamental sequences that discriminate maximal $\Gamma$-limit quotients of $\Gamma _{R(S)}$ is unique, there are choices for selecting their proper quotients. In particular, adding a branch from a canonical Hom-diagram for a proper quotient of a maximal $\Gamma$-limit quotient of $\Gamma_{R(S)}$, to a given canonical Hom-diagram for $\Gamma_{R(S)}$, results in another canonical Hom-diagram of $\Gamma_{R(S)}$.

To each  strict fundamental sequence of any Hom-diagram one can assign a $\Gamma$-NTQ  group and a completed fundamental sequence similarly to how it is done in \cite{elemFree} or \cite{selaHyp}.  
So in this way to  each fundamental sequence (\ref{canhom}) (Definition \ref{non-can}) of a canonical Hom-diagram, we assign a $\Gamma$-NTQ group  and a completed  fundamental sequence. 
The obtained Hom-diagram of completed fundamental sequences is called {\em canonical completed} Hom-diagram. Those $\Gamma$-NTQ groups from this diagram that embed maximal $\Gamma$-limit quotients of $\Gamma _{R(S)}$ are called {\em canonical $\Gamma$-NTQ groups of $\Gamma _{R(S)}$}.

 %The set of all NTQ groups corresponding to a canonical $Hom$-diagram is {\em a full set of canonical $\Gamma$-NTQ groups}. 
 
%\begin{remark} When considering fundamental sequences for NTQ groups (not necessary canonical NTQ groups) we only consider fundamental sequences such that canonical epimorphisms between the levels are retractions onto the lower level. They will be necessarily strict.
%\end{remark} 
 
 An algorithm is given in Section \ref{section:can-construct}, which constructs a set of $\Gamma$-NTQ groups containing canonical $\Gamma$-NTQ groups of a completed canonical   Hom-diagram through which all solutions of $S=1$ factor.  
 \subsection{Implicit Function Theorem}\label{subsection:IFT}

Let $N$ be a $\Gamma$-NTQ group. {\em Corrective extensions} of $N$  are obtained by

(i) Replacing each of the free abelian groups that appear in the Grushko  decompositions on different levels of the $\Gamma$-NTQ group by a free abelian group of the same rank, that contains the original one as a subgroup of finite index.

(ii) Replacing each of the free abelian vertex groups that appear in the abelian decompositions of freely indecomposable factors on different levels of the $\Gamma$-NTQ group by a free abelian group of the same rank, that contains the original one as a subgroup of finite index.

Notice that the last operation  does not make elements that aren't conjugate, conjugate (see \cite{LT2018}, page 195).  We define the abelian size of a $\Gamma$-NTQ group $N$, denoted $ab(N)$, as the sum of the ranks of the abelian vertex groups in decompositions corresponding to different levels of $N$ minus the sum of the ranks of their direct summands containing edge groups as subgroups of finite index. Then $ab(N)$ is the same as $ab(N_{corr})$ for each corrective extension $(N_{corr})$ of $N$.

\begin{definition} \label{cover} A set of corrective extensions of $\Gamma$-NTQ groups in a completed  $Hom$-diagram   for  $\Gamma _{R(S)}$ is called a {\em cover} of the diagram if every solution of $S=1$  factors through one of the corrective extensions % {\ok and for any equation $V(X,Y)=0$ if for any $\bar X$ solving $S(X)=1$ there exists $Y$ solving $V(\bar X,Y)=1$, then there exists a formal solution $Y$ in one of these corrective extensions.}
\end{definition}

Example. Consider the NTQ group $F \ast \langle x,y|[x,y]=1\rangle ,$
 Consider three corrective extensions  
 
 $F \ast \langle x,y,z|[x,z]=1, y=z^2\rangle ,$ $F \ast \langle x,y,z|[y,z]=1, x=z^2\rangle $; 
 
 $F \ast \langle x,y,z|[x,z]=1, [y,x]=1,xy=z^2\rangle .$
 
 They form a cover, but any two of them don't form a cover.
 
For the formulation of the technical version of the implicit function theorem we need the notion of a {\em generic family of solutions} of an NTQ system $W(X,A)=1$ (or test sequence in Sela's terminology). This is, in particular, an infinite discriminating family of homomorphisms from $\Gamma _{R(W)}$ to the
free product $F(y_1,\ldots ,y_m)\ast\Gamma\ast \Gamma ^{y_{m+1}}\ast\ldots \ast \Gamma ^{y_{m+k}}.$ If a generic family is divided into a finite number of subfamilies, then at least one of the subfamilies is also a generic family for  $W(X,A)=1$. We will define such families in the next subsection. In this paper we only need their existence.

\begin{prop}\label{IFT} (Version of the Implicit Function Theorem)
If $\Psi(W)$ is a generic family of solutions in $\Gamma$ for an NTQ system $W(X,A)=1$, then for any system of equations 
$U(X,Y,A)=1$ the following is true: if for any solution $\psi\in\Psi(W)$
there exists a solution of $U(X^{\psi},Y,A)=1$ in $\Gamma$, then there exists a finite number of corrective extensions $(\Gamma_{R(W)})_k$ of
$\Gamma_{R(W)}$ and for each $k$ there is a homomorphism  $f_k:Y\rightarrow (\Gamma_{R(W)})_k$  such that  $U(X,Y^{f_k},A)=1$ in $(\Gamma_{R(W)})_k$.   Any solution of $W(X,A)=1$ that factors through the completed fundamental sequence for $W(X,A)=1$, factors through the fundamental sequence for one of these corrective extensions. \end{prop}

A solution of the system $U(X,Y,A)=1$ in the corrective extension $(\Gamma_{R(W)})_k$ is called a {\em formula solution}.

This theorem is similar to  the Parametrization Theorem, also called the Implicit Function Theorem (\cite{KMimplicit},Theorem 12) for free groups. A similar result is also formulated  in \cite{selaHyp}, Theorem 2.3 for hyperbolic groups, but the formulation in \cite{selaHyp} contains an error (see \cite{elemHyp} for comments and corrections).  Recently the proof of the result appeared in \cite{Heil}.

\subsection{Generic families of solutions}

One can skip this subsection, it is given for the completeness of the presentation.
We will  describe the construction of  \emph{generic families} of solutions of an NTQ system which are used in the proof of the theorem above in \cite{selaHyp} and \cite{elemHyp}.  
Consider a strict fundamental sequence with corresponding NTQ system $S(X,A)=1$ of depth $N$. We construct generic families iteratively for each level $k$ of the system, starting at $k=N$ and decreasing $k$. There is an abelian decomposition of $G_k$ corresponding to the NTQ structure. Let $V_1^{(k)},\ldots,V_{M_k}^{(k)}$ be the vertex groups of this decomposition given some arbitrary order. We construct a generic family for level $k$, denoted $\Psi(k)$, by constructing generic families for each vertex group in order. We denote a generic family for the vertex group $V_i^{(k)}$ by $\Psi(V_i^{(k)})$. If there are no vertex groups, in other words the equation $S_k=1$ is empty ($G_k=G_{k+1}*F(X_k)$) we take $\Psi(k)$ to be a sequence of growing different Merzlyakov's words (defined in \cite{KMimplicit}, Section 4.4). 

\begin{remark}\label{Merz}When using generic families in this paper,  by \cite{selaHyp}, Proposition 2.1,  instead of a family of growing Merzlyakov's words  in $\Gamma$, one can just take new  letters  for the basis of $F(X_k)$.  So instead of a family of homomorphisms in  $\Gamma$, we can consider generic family as a family of solutions into $\Gamma\ast F(Y)$ for some basis $Y$ consisting of new letters. \end{remark}

If $V_r^{(k)}$ is an abelian group then it corresponds to equations of the form $[x_i,x_j]=1$ or $[x_i,u]=1$, $1\leq i,j\leq s$,  where $u\in U$ runs through a generating set of  a centralizer in $G_{k+1}$. A solution $\sigma$ in $G_{k+1}$ to equations of these forms is called \emph{$B$-large} if there is some $b_1,\ldots,b_s$ with each $b_i>B$ such that $\sigma(x_i)=(\sigma(x_1))^{b_1\ldots b_i}$ or $\sigma(x_i)=u^{b_1\ldots b_i}$, for $1\leq i\leq s$ (possibly renaming $x_1$). A generic family of solutions for an abelian subgroup $V_r^{(k)}$ is a family $\Psi(V_r^{(k)})$ such that for each $B_i$ in any increasing sequence of positive integers $\{B_i\}_{i=1}^{\infty}$ there is a solution in $\Psi(V_r^{(k)})$ which is $B_i$-large.

If  $V_r^{(k)}$ is a QH vertex group of this decomposition, let $S$ be the surface associated to  $V_r^{(k)}$. We associate two collections of non-homotopic, non-boundary parallel, simple closed curves $\{b_1,\ldots b_q\}$ and $\{d_1,\ldots d_t\}$. These collections should have the property that $S-\{b_1\cup\cdots\cup b_q\}$ is a disjoint union of 
%three-punctures spheres and one-punctured Mobius bands, 
connected components with Euler characteristic $-1$,
each of the curves $d_i$ intersects at least one of the curves $b_j$ non-trivially, and their union fills the surface $S$ (meaning the collection $\{b_1,\ldots b_q, d_1,\ldots ,d_t\}$ have minimal number of  intersections and $S-\{b_1,\cup\cdots\cup b_q\cup d_1\cup\cdots\cup d_t\}$ is a disjoint union of connected components  in $S$, where each connected component is either homeomorphic  to a disk or to an annulus.  If a component is homeomorphic to an annulus, then one of its boundary components is a  boundary component of the surface $S$.).

Let $\beta _1,\ldots ,\beta_q$ be automorphisms of $V_r^{(k)}$ that correspond to Dehn twists along $b_1,\ldots b_q$ , and $\delta _1,\ldots ,\delta_t$ be automorphisms of $V_r^{(k)}$ that correspond to Dehn twists along $d_1,\ldots d_t$.
We define iteratively a basic sequence of automorphisms $\{\gamma _{L,n}, \phi _{L,n}\}$ (compare with Section 7.1 of \cite{KMimplicit} where one particular basic sequence of automorphisms is used), which is determined by a sequence of $t+q$-tuples $L=\{(p_{1,n},\dots,p_{t,n},m_{1,n},\ldots,m_{q,n})\}_{n=1}^{\infty}$

Let $$\phi _{L,0}=1$$

$$\gamma _{L,n}=\phi_{L,n-1}\delta _1^{m_{1,n}}\ldots \delta _q^{m_{q,n}}, n\geq 1$$

$$\phi _{L,n}=\gamma_{L,n}\beta _1^{p_{1,n}}\ldots \beta _t^{p_{t,n}},n\geq 1$$

Assuming generic families have already been constructed for $V_i^{(k)}$, $i<r$, and for every vertex group in levels $k'>k$, and that $\Theta _k$ is a family of growing powers of Dehn twists for edges on level $k$, set $\Psi(k')=\Psi(V_{M_{k'}}^{(k')})\Theta _{k'}$ for $k<k'\leq N$ (in other words the generic family for level $k'$ is the generic family of the last vertex group at that level) and set $\Psi(N+1)=\{1\}$. Let $\pi_k:G_k\rightarrow G_{k+1}$ be the canonical epimorphism. Let $\Sigma_r^{(k)}=\{\psi_{1}\cdots\psi_{r-1}|\psi_{i}\in\Psi(V_i^{(k)})\}$ be the collection of all compositions of generic solutions for previous vertex groups. We then say that $$\Psi(V_r^{(k)})=\{\mu _{L,n,\lambda_n}=\phi _{L,n}\delta _1^{\lambda_n}\ldots \delta _q^{\lambda_n}\sigma_n\pi_k\tau |\sigma_n\in\Sigma_r^{(k)}, \tau\in\Psi(k+1)\}_{n=1}^{\infty}$$ where each $\lambda_n$ is some positive integer, is a generic family for $V_r^{(k)}$ if it has the following property: Given any $n$ and any tuple of positive numbers $\overrightarrow{A}=(A_1,\ldots,A_{nt+nq+1})$ with $A_i<A_j$ for $i<j$, $\Psi$ contains a homomorphism $\mu_{n,L,\lambda_n}$ such that the tuple

\begin{align*}
\overrightarrow{L}_{n,r_n} &=(p_{1,1},\ldots ,p_{t,1}, m_{1,2},\ldots ,m_{q,2},\ldots, m_{1,n},\ldots ,m_{q,n}, p_{1,n},\ldots ,p_{t,n}, \lambda_n) \\
 &= (L_1,\ldots,L_{nt+nq+1})
\end{align*}grows faster than $\overrightarrow{A}$ in the sense that $L_1\geq A_1$ and $L_{i+1}-L_{i}\geq A_{i+1}-A_{i}$.

Finally we set $\Psi(S)=\Psi(V_{M_1}^{(1)})$ to be a generic family of solutions in $\Gamma$ (or $\Gamma\ast F(Y)$) for the NTQ system $S(X,A)=1$. Notice that $\Psi(S)$ $\Gamma$-discriminates $\Gamma_{R(S)}$.

\begin{example}
Consider the equation $[x,y]=[a,b]$ over a torsion-free hyperbolic group $\Gamma$. Define the following $\Gamma$-automorphisms of $\Gamma[X]=\Gamma*F(X)$:
$$\beta:x\rightarrow x, y\rightarrow xy$$
$$\delta:x\rightarrow yx, y\rightarrow y$$
Notice that big powers of these automorphisms produce big powers of elements. Let $n=2a$ for some positive integer $a$, and define:
$$\phi_{p,n}=\delta^{m_1}\beta^{p_1}\cdots\delta^{m_a}\beta^{p_a}$$
where $p=(p_1,m_1\ldots,p_a,m_a)$. Now take the solution $\tau$ of $[x,y]=[a,b]$ with $\tau(x)=a,\tau(y)=b$ and consider the family of mappings
$$\Psi=\{\mu_{p,n}=\phi_{p,n}\tau, p\in P_n\}_{n=1}^{\infty}$$ where for each $n$, $P_n$ is an infinite set of $n$-tuples of large natural numbers. Then $\Psi$ is a generic family of solutions in $\Gamma$ for $[x,y]=[a,b]$.
\end{example}

Finally, if $H$ is maximal $\Gamma$-limit quotient of $\Gamma_{R(S)}$ (there is no other $\Gamma$-limit quotient $H_1$ such that the map sending images of generators of $\Gamma_{R(S)}$ in $H_1$ into their images in $H$ can be extended to a proper homomorphism), then since a discriminating  family of homomorphisms for $H$ factors through some branch $b$ of $\mathcal{T}(S,\Gamma)$, we have $\phi_b:\Gamma_{R(S)}\rightarrow N_b$, where $N_b$ is the $\Gamma$-NTQ group corresponding to this branch,  with $\phi_b(\Gamma_{R(S)})=H$. 

%Each maximal $\Gamma$-limit quotient is isomorphic to $\Gamma_{R(S_1)}$,  with $S_1=1$ an irreducible system, so there is a (canonical) fundamental sequence, with automorphisms on the top level corresponding to a JSJ decomposition of $\Gamma_{R(S_1)}$, which factors through a $\Gamma$-NTQ groups $N$ (as in Section 7 of \cite{elemFree}). This gives the canonical embeddings of each $\Gamma$-limit group $L$ into some $\Gamma$-NTQ group, as mentioned in the introduction. We then define a generic family of homomorphisms for such a $\Gamma$-limit group $\Gamma_{R(S_1)}$ to be a generic family for a $\Gamma$-NTQ group $N$ into which it embeds canonically in this manner. 

 \section{ A cover of a canonical Hom-diagram}\label{section:can-construct}

In this section we will prove Theorem \ref{thm:can-construct} which we restate here.
\addtocounter{theorem}{-1}
\begin{theorem}Let $S(Z, A)=1$ be a finite system of equations over $\Gamma$. There is an algorithm to construct  a  cover  of a   canonical completed Hom-diagram   for  the coordinate group $\Gamma _{R(S)}$. 

 Corrective extensions that form the cover  are given by their finite presentations as $\Gamma$-NTQ groups. \end{theorem}

The proof  is based on the observation that branches of the tree ${\mathcal T}(S,\Gamma )$ which contain generic families of solutions of irreducible systems whose  coordinate groups are  maximal $\Gamma$-limit quotients of $\Gamma _{R(S)}$, correspond to corrective extensions of canonical NTQ systems. 

\subsection{Sol-maximal $\Gamma$-limit quotients}

 We define a partial order on the set of $\Gamma$-limit groups $\{H_{b}=\phi_{b}(\Gamma_{R(S)})\leq N_{b}\}$ over all branches $b$ of ${\mathcal T}(S,\Gamma )$, with $N_{b}$ the $\Gamma$-NTQ group corresponding to $b$, $\phi_{b}(\Gamma_{R(S)})\leq N_{b}$ as in Section \ref{section:reworking}. It's often notationally convenient to denote $H_{b},N_{b},\phi_{b}$ by $H_i,N_i,\phi_i$, where $i\in I$ indexes the branches of ${\mathcal T}(S,\Gamma )$.  For given elements $H_i,H_j$ we say that $H_j\leq _{Sol} H_i$  if   for every  homomorphism  $\psi_j: H_j\rightarrow \Gamma$  that factors through $N_j$  there exists  a homomorphism  $\psi_i: H_i\rightarrow \Gamma$ that factors through $N_i$ such that $\phi _i\psi _i= \phi _j\psi _j $.  In this case the  canonical map $\phi _j: \Gamma _{R(S)}\rightarrow H_j$ can be split  as $\tau\phi _i,$  where $\phi _i$ is the canonical homomorphism $\phi _i: \Gamma _{R(S)}\rightarrow H_i$ and $\tau$ is a $\Gamma$-epimorphism from $H_i$ to $H_j$. 
 
\begin{prop}\label{image:quotient}There is an algorithm to find all branches $b$ where $H_b$  is Sol-maximal. 
\end{prop}
\begin{proof}Let $\{\ell_1,\ldots,\ell_n\}$ generate $\Gamma_{R(S)}$, and for each branch $b$ in $\mathcal{T}(S,\Gamma)$, let $H_b=\phi_b(\Gamma_{R(S)})$ be the images of the homomorphisms into the corresponding $\Gamma$-NTQ groups. We proceed by considering the finite collection $\{H_b\}$, and the finite collection  $\{(H_i,H_j)\}$ of pairs
from $\{H_b\}$.

First we describe an algorithm which tests a pair $(H_1,H_2)$ if $H_2\leq_{Sol} H_1$. Let $N_1(X,A)=1$, $X=\{x_1,\ldots,x_r\}$, and $N_2(Y,A)=1$, $Y=\{y_1,\ldots,y_s\}$, be $\Gamma$-NTQ systems such that $N_1=\Gamma_{R(N_1)}$ and $N_2=\Gamma_{R(N_2)}$. There are words $\phi_1(\ell_i)=\upsilon_i(X,A)\in \Gamma_{R(N_1)}$ and $\phi_2(\ell_i)=\mu_i(Y,A)\in \Gamma_{R(N_2)}$ for $1\leq i \leq n$.

 Then the following first order formula is true in $\Gamma$, if and only if $H_2\leq_{Sol}H_1$:
$$\forall h_1,\ldots,h_s\exists g_1,\ldots,g_r
(N_2(h_1,\ldots,h_s,A)=1\rightarrow ({N}_1(g_1,\ldots,g_r,A)=1$$ $$
\\\wedge \upsilon_1(g_1,\ldots,g_r,A)=\mu_1(h_1,\ldots,h_s,A)
\\\wedge\ldots\wedge \upsilon_s(g_1,\ldots,g_r,A)=\mu_s(h_1 ,\ldots,h_s,A))).$$
\\
\\
In other words, for each $\Gamma$-homomorphism $\alpha:\Gamma_{R(N_2)}\rightarrow\Gamma$ there is a $\Gamma$-homomorphism $\beta:\Gamma_{R({N}_1)}\rightarrow\Gamma$ such that $\beta_{H_1}=\tau\alpha_{H_2}$ (here $\alpha$ and $\beta$ are restricted to $H_2$ and $H_1$. 

%Also, conversely if the formula is false, then there is no such $\Gamma$-homomorphism $\sigma$. Can we claim that if the formula is false and $H_2$ is in fact a quotient of $H_1$ there is another quotient $H_1'$ of $H_1$ (found by formula? or from generic family) for which the formula is true, so know $H_2$ is quotient of $H_1'$

Denote the above formula in $\Gamma$, as constructed from the pair $H_1,H_2$, by $\Psi(H_1,H_2)$. We now describe an algorithm to test if $\Psi(H_1,H_2)$ is true. Every conjunction of equations is equivalent to some single equation, so let $S_{\Psi}=S_{\Psi}(h_1,\ldots,h_s,g_1,\ldots,g_r)=1$ be an equation equivalent to\\ ${N}_1(g_1,\ldots,g_r,A)=1
\wedge \upsilon_1(g_1,\ldots,g_r,A)=\mu_1(h_1,\ldots,h_s,A)
\wedge\ldots\wedge\\ \upsilon_s(g_1,\ldots,g_r,A)=\mu_s(h_1 ,\ldots,h_s,A).$

By Proposition \ref{IFT}, $\Psi(H_1,H_2)$ is true in $\Gamma$ if and only if the corresponding equation $S_{\Psi}$ (with $h_1,\ldots,h_s$ considered as coefficients in $N_2^*$) has a solution in each $N_2^*$, where $N_2^*$ belongs to the (canonical) set of corrective extensions of $N_2$. To obtain each corrective extension, for some abelian vertex groups $A_i$ of $N_2$, abelian groups $\overline{A}_i$ with $A_i\leq_{f.i}\overline{A}_i$ are constructed exactly by adding particular roots to each $A_i$. Notice, that since $N_1$, $N_2$ are $\Gamma$-NTQ groups,  the free products at their bases are free products of $\Gamma\ast F(y_1,\ldots, y_m)\ast \Gamma ^{y_{m+1}}\ast\ldots\ast\Gamma ^{y_{m+k}}$, therefore a corrective extension does not extend the base group. However, Proposition \ref{IFT} does not directly imply what those particular roots are. To find which roots to add, and construct the corrective extensions and check them for solutions, we use Dahmani's construction (\cite{DisomRel}) of 'canonical representatives' for elements in $N_2$ (since it is toral relatively hyperbolic with parabolic subgroups given by the abelian vertex groups $A_1,\ldots,A_k$ of the NTQ structure) in the free product $F(\ell_1,\ldots,\ell_n,y_1,\ldots ,y_{m+k})*A_1*\cdots*A_k$ (Theorem 4.4 in \cite{DisomRel}). These canonical representatives give a disjunction of equations $S_{\Psi}^{(1)}\vee\cdots\vee S_{\Psi}^{(N)}$, which is equivalent to a single equation $S_{\Psi}^{can}$, over $F(\ell_1,\ldots,\ell_n,y_1,\ldots ,y_{m+k})*A_1*\cdots*A_k$.

Solutions for $S_{\Psi}$ exist in $N_2$ if and only if solutions for $S_{\Psi}^{can}$ exist in \newline $F(\ell_1,\ldots,\ell_n,y_1,\ldots ,y_{m+k})*A_1*\cdots*A_k$. Canonical representatives also exist for $N_2^*$ (toral relatively hyperbolic as well), and using those, the equation $S_{\Omega}$ over $N_2^*$ (with $h_1,\ldots,h_s$ considered as coefficients in $N_2^*$) induces the same equation $S_{\Omega}^{can}$ over a free product  $F(\ell_1,\ldots,\ell_n,y_1,\ldots ,y_{m+k})*\overline{A}_1*\cdots*\overline{A}_k$ with the same roots added to the same abelian subgroups. This is because Dahmani's construction of canonical representatives, applied to equivalent systems of triangular equations, give new equations corresponding to the ``middles'' of the triangles which, if the middles are ``short'' (as in hyperbolic case), are determined by $S_{\Omega}$, and if the middles belong to parabolic subgroups, are just commuting equations.  So similarly, solutions for $S_{\Omega}$ exist in $N_2^*$ if and only if solutions for $S_{\Psi}^{can}$ exist in $F(\ell_1,\ldots,\ell_n,y_1,\ldots ,y_{m+k})*\overline{A}_1*\cdots*\overline{A}_k$. It is these free product extensions which can be found and checked algorithmically. Indeed, Diekert and Lohrey proved \cite{DL}  that if the universal theories of the
factors $R_1$ and $R_2$ are decidable, so is the universal theory of $R_1*R_2$. In the abelian factors, conventional linear algebraic methods of finding solutions determine the finite index extensions (by giving which roots of elements are needed for solutions).  In $F(\ell_1,\ldots,\ell_n,y_1,\ldots ,y_{m+k})$ the universal theory is decidable by \cite{Mak84}.
\end{proof}
If $H_2\leq_{Sol}H_1$ we remove $H_2, N_2$ and the corresponding branch from ${\mathcal T}(S,\Gamma )$. So every non-Sol-maximal $\Gamma$-limit quotient will be removed. 

 We will also remove   $H_i, N_i$ if  in the diagram that we are considering there exists a family $\{H_j,N_j,j\in I\}$ such that for every  homomorphism  $\psi_i: H_i\rightarrow \Gamma$  that factors through $N_i$  there exists  for some $j\in J$ a homomorphism  $\psi_j: H_j\rightarrow \Gamma$ that factors through $N_j$ such that $\phi _i\psi _i= \phi _j\psi _j $. We call such a pair $H_i, N_i$ {\em redundant}.  After removing one redundant branch (pair  $H_i, N_i$) we consider the remaining diagram and identify redundant branches  in the new diagram. We keep removing branches until the obtained diagram does not have redundant branches.
 \begin{remark} \label{rk2} The algorithm to find these pairs  $H_i, N_i$ is very similar to the algorithm for removing non Sol-maximal quotients and their branches. 
Instead of the formula $\Psi(H_j,H_i)$ one has to consider the formula $\Psi((\vee _{j\in J}H_j),H_i)$, with the same premise of the implication but the conclusion being the disjunction  of the conclusions for $H_j,N_j, j\in J$.  But the  disjunction is equivalent to a system of equations in $\Gamma$. \end{remark}
\begin{definition}
Denote the subtree of  ${\mathcal T}(S,\Gamma )$ obtained after removal of these branches by  $\hat {\mathcal T}(S,\Gamma )$.
\end{definition}

\begin{prop}\label{construct:Grushko}Given a finite system of equations $S(Z,A)=1$ over $\Gamma$, there is an algorithm which finds the Grushko decomposition of of $\phi _b(\Gamma_{R(S)})$ for each branch $b$ of $\hat {\mathcal T}(S,\Gamma )$.
\end{prop}
\begin{proof}We show how to construct a free decomposition of the image $H$ of $\phi_b:\Gamma_{R(S)}\rightarrow N_b$, for each strict fundamental sequence $\Phi_b$.
If $H$ is a non-trivial free product  then there is a solution of the system $S(Z,A)=1$ in $\Gamma \ast\Gamma$. Therefore we can solve the corresponding systems  induced by canonical representatives of elements from 
$\Gamma \ast\Gamma$ in the group $F(A)\ast F(A)$. Then we will see the free product decomposition  of the corresponding ($\Gamma \ast\Gamma$)-NTQ group  (since generators of each copy of $\Gamma$ do not interact in the NTQ structure). The same ($\Gamma \ast\Gamma$)-NTQ group can be considered as a  $\Gamma$-NTQ group  if instead of the second copy of $\Gamma$ we take $\Gamma$ conjugated by a new element. Therefore for every branch $b_1$ of  $\hat {\mathcal T}(S,\Gamma )$ corresponding to $H\cong H_{b_1}$  there is a branch $b_2$  such that  the $\Gamma$-NTQ group for this branch corresponds to $H\cong H_{b_2}$, is freely decomposable and $H_{b_1}\leq _{Sol} H_{b_2}$. Since $H\cong H_b$ is a Sol-maximal $\Gamma$-limit quotient, the $\Gamma$-NTQ group corresponding to $H$ must be freely decomposable, and we will obtain 
 the induced free product decomposition of $H$ (given by generators in the free factors of the $\Gamma$-NTQ group) and this free product decomposition is non-trivial. 
\end{proof}
\subsection{Proof of Theorem 2}
All solutions of the system $S(Z,A)=1$ in $\Gamma\ast F(Y)$ factor through the tree $\hat {\mathcal T}(S,\Gamma )$ that can be algorithmically constructed by Lemma \ref{le:spleff} and Remark \ref{rk2}.  We know that there exists a canonical  completed
$Hom$-diagram for $S(Z,A)=1$.

We will show that a $\Gamma$-NTQ group corresponding to a branch $b$ of $\hat {\mathcal T}(S,\Gamma )$  is exactly a corrective extension of some canonical $\Gamma$-NTQ group for  $H=\phi_b(\Gamma_{R(S)})$  (that appear in the canonical  completed
$Hom$-diagram for $H$).  Since  all solutions of the system $S(Z,A)=1$  factor through the tree $\hat {\mathcal T}(S,\Gamma )$, this implies that  $\hat {\mathcal T}(S,\Gamma )$ is  a  cover  of a canonical completed Hom-diagram for  $\Gamma_{R(S)}.$

\begin{prop} \label{maxx} Let $b$ be a branch of $\hat {\mathcal T}(S,\Gamma )$ and $\bar S=1$ an irreducible system of equations with $\phi_b(\Gamma_{R(S)})=H\cong \Gamma_{R(\bar S)}$.  Then the $\Gamma$-NTQ group $N_b$ corresponding to the completed fundamental sequence for $b$, is a corrective extension of  a canonical $\Gamma$-NTQ group $N=\Gamma_{R(W)}$ of $H$ ($N$ appears in a completed  canonical Hom-diagram for $H$).
\end{prop} 
%Suppose  a  $\Gamma$-limit quotient $H$  of $\Gamma _{R(S)}$ corresponds to a branch $b_H$ of $\hat {\mathcal T}(S,\Gamma )$. Suppose a branch $b$ of $\hat{\mathcal T}(S,\Gamma )$  with NTQ group $M$ contains a generic family of homomorphisms  for a canonical $\Gamma$-NTQ group $N$ of $H$. And suppose that $H$ is isomorphic to the image of $\Gamma _{R(S)}$ in  $M$.  Then $M$ is  a corrective extension of $N$. 

\begin{proof}  $H$ is embedded into some canonical $\Gamma$-NTQ groups, each corresponding to some branch in a canonical completed $Hom$-diagram for $\Gamma _{R(\bar S)}$.  Since  all solutions of the system $S(Z,A)=1$  factor through the tree $\hat {\mathcal T}(S,\Gamma )$, generic families of homomorphisms for these canonical $\Gamma$-NTQ groups  also factor through 
this tree.  From the construction of generic families it follows that if a generic family is divided into a finite number of subfamilies, then at least one of these subfamilies is again a generic family. Therefore, for each  canonical $\Gamma$-NTQ group  of  $H$   there is a branch  of the tree $\hat{\mathcal T}(S,\Gamma )$  that contains a generic family of solutions  of the system of equations corresponding to this  group. 

Suppose first that $b$ itself contains a generic family for $W=1$, where $N=\Gamma_{R(W)}$.  (At the end we will show that otherwise this branch would be removed from ${\mathcal T}(S,\Gamma )$ when constructing $\hat {\mathcal T}(S,\Gamma )$.)
 
The $\Gamma$-NTQ group $N_b=\Gamma _{R(T)}$ corresponding to the branch $b$, is the coordinate group for some irreducible system of equations $T(X,Y)=1$.  Having $H\leq N_b$,  we include the set of generators of $H$ (denote it $X$) into the set of generators for $N_b$,    $N_b=\langle X,Y|T(X,Y)=1\rangle$.  By Proposition \ref{IFT},  $T(X,Y)=1$ has a formula solution $Y=f(X,Z)$ in a corrective extension $N_{corr}(X,Z)$ of $N$.  By \cite{Groves}, all solutions of  $T(X,Y)=1$ in $N_{corr}$ 
are contained in a finite number of fundamental sequences over a free product of a free group and subgroups of $N_{corr}$. By Lemma \ref{le:spleff}, each fundamental sequence consists of a monomorphism from $\Gamma _{R(T)}$ onto  a free product of a free group and subgroups of $N_{corr}$ followed by different embeddings of subgroups of $N_{corr}$  into $N_{corr}$ and specialization of variables of the free group. Indeed, there are no automorphisms involved into fundamental sequences because $\Gamma _{R(T)}$ does not have splittings relative to $N_{corr}$.  So $N_b=\Gamma _{R(T)}$ is a free product of a free group and subgroups of $N_{corr}$.  By the same Lemma \ref{le:spleff}, there are no other free factors in $N_b$ except those containing factors in the Grushko decomposition of $H$, which can be algorithmically found by Proposition \ref{construct:Grushko}, therefore $N_b\leq N_{corr}$.
Moreover, every non-cyclic factor in the Grushko decomposition of $H$ is a subgroup of a factor $\bar N_b$ in a free decomposition of $N_b$, and $\bar N_b$ is a subgroup of a factor in the Grushko decomposition of $N_{corr}$. The free group factor in the Grushko decompositions of $H, N_b$ and $N_{corr}$ is the same and we can move it to the base level of these groups.
Therefore we can consider each factor $ \bar N_b$ separately.  Instead we just assume that
$H$ and $N$ are freely indecomposable.

%   Let $N=\Gamma _{R(W)}$, $N=\langle X|W(X)=1\rangle $.

Let $N_{corr}=N_1>N_2>\ldots  >N_n$ be a sequence of $\Gamma$-NTQ groups corresponding to different levels of $N_{corr}$.
The graph of groups decomposition $\Delta _1$ corresponding to the top level $N_1$ may contain MQH subgroups, abelian vertex groups and  a free product of non-QH non-abelian vertex groups which is $N_2$.  Notice that $N_1$ and $H$ have the same MQH subgroups.

%\begin{lemma}   $N_b$  does not have an abelian splitting where $N$ and non-QH nonabelian vertex groups of $\Delta _1$ are elliptic.  \end{lemma}
%\begin{proof}  This follows from  Lemma \ref{le:spl} because $H\leq N$ and such a splitting would correspond to the top level of $N_b$.
 %\end{proof}

By Lemma \ref{le:qh} for each MQH subgroup $Q$ of $N_b$, the intersection  $H\cap Q$  is a finite index subgroup of $Q$.  $H\cap Q$ is also an MQH subgroup of $H$ and $N_{corr}$. This implies that $Q=N \cap Q$.  Therefore MQH subgroups of $N$ and $N_b$ coincide.  Abelian vertex groups of $H$ are conjugate into abelian vertex groups of $N_b$ as subgroups of finite index, therefore abelian vertex groups of $N_b$ are finite index subgroups in the abelian vertex groups of $N_{corr}$.  In this case we can replace the abelian vertex groups of $N_{corr}$ by their finite index subgroups that are  abelian vertex groups of $N_b$. Since they still contain abelian vertex groups of $H$ this results  in another  corrective extension of $N$ that  we still call $N_{corr}$. We redefine the NTQ structure of $N_b$ so that  all of these QH and abelian vertex groups are at the top level $N_{b1}$ of $N_b$. Since $N_b$ is a $\Gamma$-NTQ group, there is a retraction of $N_b$ to the second level $N_{b2}$.

On the second level, we have to show that the image $\bar H_2$ of $H$ in $N_2$ is isomorphic to its image $H_2$ in $N_{b2}$. Since a generic family of homomorphisms  from $N$ factors through $N_b$ and the splitting associated to the top levels $N_1$ and $N_{b1}$  are the same, the homomorphisms from $N_2$ for that generic family factor through $N_{b2}$. So  $\bar H_2$ is a  quotient of $H_2$. $\bar H_2$ is discriminated by homomorphisms which are minimal with respect to the  group of canonical automorphisms corresponding to a JSJ decomposition of $H$, which is equivalent to the splitting associated with $N_1$. So this is  the   same  group  as that of canonical automorphisms for the splitting associated to  $N_{b1}$.  

The group  $N_b$ was constructed  from an $F$-NTQ group $M$.  There is a branch corresponding to $M$ in a canonical Hom-diagram for a system of equations (over a free group) induced by canonical  representatives of solutions to $S=1$.   Since  a generic family of solutions for $W=1$ factors through $N_b$, canonical representatives of solutions in this generic family factor through the $F$-NTQ group $M$ from which $N_b$ was constructed.  Elements of this generic family  are elements in $F(y_1,\ldots , y_m)*\Gamma *\Gamma ^{y_{m+1}}*\ldots \Gamma ^{y_{m+k }}$. If $\Gamma$ is generated by $A$, then 
  canonical representatives for this family are elements in 
$F(y_1,\ldots , y_m)*F(A)*F(A)^{y_{m+1}}*\ldots *F(A)^{y_{m+k}}$. MQH and abelian subgroups of the top level  $N_{b1}$ were constructed from MQH and abelian subgroups of the top level $M_1$ of the NTQ group $M$ in the canonical completed $Hom$-diagram  for the system of equations over $F(y_1,\ldots , y_m, A)$ for these canonical representatives.
If homomorphisms discriminating $H_2$ are not minimal with respect to the group of canonical automorphisms  corresponding to a JSJ decomposition of $H$, then the homomorphisms discriminating  $N_{b2}$ are not minimal with respect to the group of canonical automorphisms  associated to the splitting of $N_{b1}$. However, in that case  homomorphisms discriminating  the second level $M_2$ of $M$  are not minimal with respect to the group of canonical automorphisms for the splitting associated to $M_1$.  This contradicts to the assumption that $M$ is canonical.  Therefore $\bar H_2=H_2.$

Now we have exactly the same situation for the second levels of $N_b$ and $N_{corr}$  as  for the top levels. Again,   by Proposition \ref{IFT}, $N_{b2}$ has a homomorphism  into the free product  of $F(Y)$ and subgroups of $N_2$,   this homomorphism is unique, and   $\bar H_2$  is a subgroup of $N_{b2}$,.

  Going inductively from  top to  bottom we compare all levels of $N_{corr}$ and $N_b$ and show that they are the same except that  some abelian vertex groups of $N_b$ are finite index subgroups of corresponding abelian vertex groups of $N_{corr}$. But these abelian subgroups of $N_b$ still contain corresponding abelian subgroups of $N$ as subgroups of finite index.  
  
Since the bottom groups of $N_b$ and $N_{corr}$ are the same,  this proves that $N_b$ is a corrective extension of $N$ and proves the proposition.

We proved that for all  branches $b$ that contain generic families for $W=1$ the groups $N_b$ are corrective extensions of $N$.  Therefore all solutions in the strict fundamental sequence corresponding to $N$ factor through these branches.  Indeed, a finite family of corrective extensions of $N$ either contains all solutions in the strict fundamental sequence corresponding to $N$ of the whole generic family of solutions  is missing. 
Those branches $b$ for which  $H$ is embedded into $N_b$ but there is no  canonical $\Gamma$-NTQ group for $H$ such that a generic family  of solutions for this group factors through $N_b$ would be removed from ${\mathcal T}(S,\Gamma )$ when constructing $\hat {\mathcal T}(S,\Gamma )$.
\end{proof}

Since all solutions of the system $S(Z,A)=1$ in $\Gamma\ast F(Y)$ factor through the tree $\hat {\mathcal T}(S,\Gamma )$ and each branch by Proposition \ref{maxx} corresponds to a corrective extension of a canonical $\Gamma$-NTQ group for some quotient of $\Gamma _{R(S)}$,  this tree is a cover of a  canonical completed Hom-diagram for $\Gamma_{R(S)}.$

This proves Theorem \ref{thm:can-construct}.

We would like to thank N. Touikan for the important comments which helped improving
the quality of the paper.

 \bibliography{can-hom-biblio}
\bibliographystyle{amsplain}

\addcontentsline{toc}{chapter}{\numberline{}Bibliography }
\end{document}